\documentclass[english,openary,11pt]{article}
\usepackage{cmap}
\usepackage[english]{babel}
\usepackage{graphicx}
\DeclareGraphicsExtensions{.png}
\usepackage{floatflt}
\usepackage{amsfonts}
\usepackage{amsthm}
\usepackage{amsmath}
\usepackage{mathtools}                 
\usepackage{gensymb}
\usepackage{amssymb,amsfonts}
\usepackage[usenames]{color}
\usepackage{wrapfig}
\usepackage{natbib}
\usepackage{url}
\usepackage{cmap}
\usepackage{graphicx}
\usepackage{multirow}
\usepackage{comment}
\usepackage{caption}
\usepackage{subcaption}
\usepackage{float}
\usepackage{diagbox}
\usepackage{enumerate}
\usepackage[shortlabels]{enumitem}
\usepackage{titling}
\usepackage{makecell}
\usepackage{booktabs}
\usepackage{amsthm}
\usepackage{bm}
\usepackage{subcaption}

\usepackage[hidelinks,hyperindex,breaklinks]{hyperref}
\usepackage[nottoc,numbib]{tocbibind} 
\setlist[itemize]{noitemsep, topsep=3pt, leftmargin=18pt}
\setlist[enumerate]{noitemsep, topsep=3pt, leftmargin=22pt}
\usepackage[title]{appendix}
\usepackage{breakcites}
\usepackage{dashrule}
\usepackage{parcolumns}

\allowdisplaybreaks

\usepackage{tikz}
\usetikzlibrary{positioning, fit, arrows, arrows.meta,
calc, decorations.pathreplacing, backgrounds}
\usepackage{pgfplots}

\usepackage{soul}               
\setul{}{0.6pt}
\setstcolor{blue}

\usepackage[titles]{tocloft}

\newcommand\xqed[1]{%
  \leavevmode\unskip\penalty9999 \hbox{}\nobreak\hfill
  \quad\hbox{#1}}
\newcommand\demo{\xqed{$\triangle$}}
\newcommand{\demoo}{\tag*{$\triangle$}}

\usepackage[left=1.65cm,right=1.65cm,
    top=1.6cm,bottom=1.6cm,bindingoffset=0cm]{geometry}

\theoremstyle{definition}
\newtheorem{remark}{Remark}

\newtheorem{example}{Example}
\newtheorem{definition}{Definition}

\newtheorem{theorem}{Theorem}
\newtheorem{lemma}{Lemma}
\newtheorem*{auxiliary-lemma}{Auxiliary lemma}
\newtheorem{corollary}{Corollary}
\newtheorem{proposition}{Proposition}

\newtheorem*{remark-conjecture}{Remark-conjecture}

\newcommand{\R}{\mathbb{R}}
\newcommand{\N}{\mathbb{N}}

\newcommand{\E}{\mathbb{E}}
\renewcommand{\P}{\mathbb{P}}

\begin{document}

\author{Ivan Novikov, Université Paris 1 Panthéon-Sorbonne}
\title{Strategies in POMDPs with Stage Duration}
\date{}

\maketitle

\begin{abstract}
Partially observable Markov decision processes (POMDPs) with stage duration provide a framework for approximating continuous-time behavior by scaling transition probabilities with a stage duration parameter $h \in (0,1]$. While previous literature has primarily focused on the limit of the discounted value as the stage duration $h$ vanishes, this paper investigates the global behavior of the asymptotic value, $V(h)$, across varying stage durations. Our main result demonstrates that any strategy in a POMDP with stage duration $h$ can be mimicked in the base POMDP ($h=1$). Specifically, we provide an explicit construction showing that for any strategy in the POMDP with stage duration $h$, there exists a strategy in the base POMDP that secures the same asymptotic payoff. As a consequence of this theorem, we establish that the value function $V(h)$ is nondecreasing with respect to $h$, and that the continuous-time limit $\lim_{h \to 0} V(h)$ exists.
\end{abstract}

\tableofcontents


\newpage 

\textbf{Note:} Throughout the paper, we use $\triangle$ to mark the end of a definition or a remark.

\section{Introduction}

\textbf{Partially observable Markov decision processes (POMDPs)} were introduced by \cite{Dra62}. In this paper, we consider the special case where the signal depends only on the current state and is deterministic. Such a POMDP proceeds in discrete time as follows. In the beginning, the initial state is drawn according to some probability distribution, after which a deterministic signal is generated based on that state. At each stage, the decision maker observes the signal, recalls all previous actions and signals, and chooses an action. This action determines the stage payoff. The next state is then drawn randomly, conditional on the current state and action. The goal of the decision maker is to maximize his overall payoff. 

POMDPs can be generalized to the case of two decision makers with opposite interests. In this case, we refer to them as zero-sum stochastic games. In this framework, we can distinguish between the cases of full state observation, where the state is fully observed by both decision makers (introduced by \cite{Sha53}); public signals, where the signal is seen by both decision makers (see, e.g., \cite{Zil16}); and private signals, where each decision maker observes his own signal (see, e.g., \cite{Ren06}). Note that in POMDPs, there is no difference between public and private signals because there is only one decision maker.

\textbf{POMDPs with stage duration} were introduced by \cite{Ney13} in the context of zero-sum stochastic games with fully observed state. Given a stochastic game $G_1$,  \cite{Ney13} considers a family $G_h$ of stochastic games in which the leaving probabilities\footnote{A \emph{leaving probability} refers to any transition probability between two \emph{distinct} states; see Definition~\ref{klp01} for the precise definition.} and the discount rate are normalized at each stage; that is, they are proportional to $h$. Let $V_{\lambda}(h)$ denote the value of the game with discount factor $\lambda$ and stage duration $h$. The games with stage duration $h$ approximate continuous-time stochastic games as $h$ vanishes; see, e.g., the PhD thesis \cite[Introduction, \S~2.2]{Nov25a}. Previous papers on stage duration have examined the limit of $V_{\lambda}(h)$ as $h$ vanishes:
\begin{enumerate}
	\item In the case of fully observed state: \cite{Ney13}, \cite{SorVig16};
	\item In the case of public signals: \cite{Sor18}, \cite{Nov24a}, \cite{Nov24b}; 
	\item In the case of private signals: \cite{CarRaiRosVie16}, \cite{Fab16}.
\end{enumerate}
Note that not all of the above papers refer to such games as ``games with stage duration''; instead, they are sometimes called, e.g., ``games with frequent actions'', but the underlying idea is always to approximate continuous-time stochastic games using discrete-time ones.

Now, we define
$$V(h) = \lim_{\lambda \to 0} V_\lambda (h).$$
The \textbf{main goal of this article} is to investigate the global behavior of $V(h)$ across varying stage durations, rather than solely focusing on its limit as $h$ vanishes. Our main result, Theorem~\ref{mainlemma}, establishes that any strategy executed in $G_h$ can be perfectly mimicked in the base POMDP $G_1$. 
Specifically, if a strategy $\sigma$ in $G_h$ yields an asymptotic payoff of $v$, we provide an explicit construction for a strategy $\widehat \sigma$ in $G_1$ that secures the same asymptotic payoff $v$.

Theorem~\ref{mainlemma} has several corollaries: 
\begin{enumerate}
	\item Corollary~\ref{corollary1}: Any strategy in $G_{h_1}$ can be mimicked by a strategy in $G_{h_2}$, provided that $h_1 < h_2$; 
	\item Corollary~\ref{corollary2}: The function $h \mapsto V(h)$ is nondecreasing;
	\item Corollary~\ref{corollary3}: The continuous-time limit $\lim_{h \to 0} V(h)$ exists.
\end{enumerate}

In addition to the above results, we also obtain the following findings:
\begin{enumerate}
	\item In \S\ref{final1}, we consider the case of a fully observed state. We show that in this setting, the function $h \mapsto V(h)$ is constant.
	\item In \S\ref{final2}, we study the continuity of the function $h \mapsto V(h)$. Propositions~\ref{graa1} and \ref{graa2} show that this function is lower semi-continuous and left-continuous on $(0,1)$. In Example~\ref{exam1}, we construct a POMDP in which $V(h)$ is neither lower semi-continuous nor left-continuous at $h=1$.
	\item In \S\ref{grana1}, we prove that the result of Corollary~\ref{corollary2} holds true only in the case of deterministic signals. As soon as the signals are random, the opposite is often true: when $h$ is small, the decision maker can deduce the current state by observing the sequence of signals. Consequently, the function $h \mapsto V(h)$ is almost never nondecreasing when random signals are present. In Example~\ref{exam2}, we construct a POMDP that illustrates this phenomenon.
\end{enumerate}

All of our results are novel. We offer the following remarks regarding their contribution compared to the existing literature:
\begin{enumerate}
	\item Our paper examines not only the case of a vanishing stage duration $h \to 0$, but also that of a fixed stage duration $h \in (0,1]$. While the latter case is well understood in the case of a fully observed state, to the best of our knowledge, it has not yet been studied for a partially observed state. This paper fills this gap by initiating research in this direction.
		\item Corollary~\ref{corollary3} considers the continuous-time limit $\lim_{h \to 0} V(h)$ for a patient decision maker (i.e. as the discount factor $\lambda$ vanishes). While previous literature has also studied the limit $\lim_{h \to 0} V(h)$, it considered the case of a fixed discount factor $\lambda$, unlike our approach.
		\item Note that in the context of stage duration, the asymptotic case $\lambda \to 0$ was also considered in \cite{Nov24b}. However, while that paper considered the double limit, $\lim_{\lambda \to 0} \lim_{h \to 0} V_\lambda(h),$ the present paper considers the limit $\lim_{\lambda \to 0} V_\lambda(h)$ for a fixed $h$, as well as the double limit $\lim_{h \to 0} \lim_{\lambda \to 0} V_\lambda(h)$. Since there is no a priori reason to believe that the equality
$$\lim_{\lambda \to 0} \lim_{h \to 0} V_\lambda(h) = \lim_{h \to 0} \lim_{\lambda \to 0} V_\lambda(h)$$
holds, the results of our paper are independent of those in \cite{Nov24b}.
\end{enumerate}

\section{POMDPs with stage duration}

A \emph{partially observable Markov decision process (POMDP)} is a 7-tuple $(\Omega, A, S, f, g, P, p_1)$, where $\Omega$ is the finite set of states, $A$ is the finite set of actions, $S$ is the finite set of signals, $f : \Omega \to S$ is the function giving a signal for each state, $g : \Omega \times A \to \R$ is the stage payoff function, $P : \Omega \times A \to \Delta(\Omega)$ is the transition probability function, and $p_1 \in \Delta(\Omega)$ is the initial distribution on the states. The POMDP $(\Omega, A, S, f, g, P, p_1)$ proceeds as follows. An initial state $\omega_1$ is drawn from $p_1$, and the decision maker receives the signal $f(\omega_1)$. At each stage $n \in \N^*$, the decision maker chooses an action $a_n \in A$ and receives the unobserved payoff $g(\omega_n, a_n)$. The next state $\omega_{n+1}$ is drawn according to $P(\omega_n, a_n)$, and the decision maker receives the signal $f(\omega_{n+1})$.

\emph{A history of length $t \in \N$} in the POMDP $(\Omega, A, S, f, g, P, p_1)$ is
$(s_1, a_1, s_2, a_2, \ldots, s_{t-1}, a_{t-1}, s_t)$.
The set of all histories of length $t$ is 
$H_t := S \times (A \times S)^{t-1}$.
A \emph{(behavior) strategy} of the decision maker is a function 
$\sigma : \bigcup_{t \ge 1} H_t \to \Delta(A)$.
The decision maker's strategy induces a probability distribution on the set $S \times (A \times S)^{\N^*}$. (Indeed, the strategy induces a probability distribution on the set $H_1$, then on the set $H_2$, etc. By the Kolmogorov extension theorem, this probability can be extended in a unique way to the set 
$S \times (A \times S)^{\N^*}$). In particular, given an initial probability distribution $p_0 \in \Delta(\Omega)$, a strategy
$\sigma : \bigcup_{t \ge 1} H_t \to \Delta(A)$, 
and the induced probability measure $\P^{p_0}_{\sigma}$ on $S \times (A \times S)^{\N^*}$, we can consider the expectation $\E^{p_0}_{\sigma}$ of any random variable defined on $\bigcup_{t \ge 1} H_t$.

\begin{definition}[POMDP with stage duration, {\cite[Definition~1]{Nov24b}} and {\cite[p. 240]{Ney13}}]
Fix a POMDP 
$$G_1 = (\Omega, A, S, f, g, P, p_1).$$
The \emph{POMDP with stage duration} $h \in (0,1]$ is the POMDP
$$G_h = (\Omega, A, S, f, g, P_h, p_1),$$
with
$$P_h (\cdot \mid \omega, a) = h P(\cdot \mid \omega, a) + (1-h) \delta_\omega (\cdot),$$
where 
$$\delta_\omega (\omega') = 
\begin{cases}
1, &\text{if } \omega' = \omega; \\
0, &\text{otherwise.}
\end{cases}$$
In $G_h$, we consider the asymptotic value
\[V(h) := \lim_{\lambda \to 0} \left[ \sup_\sigma \E_\sigma^{h} \left( \lambda h \sum_{i = 1}^{\infty} (1-\lambda h)^{i-1} g(\omega_i, a_i) \right)\right]. \demoo\]
\label{klp01}
\end{definition}

In the above definition, we use $\E_\sigma^{h}$ to denote the expectation generated by the strategy $\sigma$ in $G_h$. Formally, this expectation also depends on the initial probability distribution $p_0$, but since this distribution is independent of the stage duration $h$, we omit $p_0$ from the notation.

\begin{remark}
Definition~\ref{klp01} coincides with the one given in \cite{Nov24b} except that the latter considers the $\lambda$-discounted payoff
$$V_\lambda(h) := \sup_\sigma \E_\sigma^{h} \left( \lambda h \sum_{i = 1}^{\infty} (1-\lambda h)^{i-1} g(\omega_i, a_i) \right),$$
where $\lambda \in (0,1],$ 
whereas we consider its limit $V(h) =  \lim_{\lambda \to 0} V_\lambda(h).$ 
Definition~\ref{klp01} is also similar to the one introduced in \cite{Ney13}, except that the latter considers the case of full state observation, and uses $V_\lambda(h)$ instead of $V(h)$. \demo
\end{remark}

We will need an alternative expression for $V(h)$. To this end, we provide the following definition.

\begin{definition}[cf. {\cite[p. 286]{Ara93}}]
For each $h \in (0,1]$ and each strategy $\sigma$ in $G_h$, we define the \emph{expected long-run average payoff} as
\[R(\sigma, h) := \liminf_{T \to \infty} \E_\sigma^{h} \left( \frac{1}{T} \sum_{i = 1}^{T} g(\omega_i, a_i) \right).\demoo \]
\end{definition}

\begin{remark}
We have 
\begin{align*}
V(h) = \lim_{\lambda \to 0} \left[ \sup_\sigma \E_\sigma^{h} \left( \lambda h \sum_{i = 1}^{\infty} (1-\lambda h)^{i-1} g(\omega_i, a_i) \right)\right] =& \lim_{\lambda \to 0} \left[\sup_\sigma \E_\sigma^{h} \left( \lambda \sum_{i = 1}^{\infty} (1-\lambda)^{i-1} g(\omega_i, a_i) \right)\right] \\=& \sup_\sigma \left[\liminf_{T \to \infty} \E_\sigma^{h} \left( \frac{1}{T} \sum_{i = 1}^{T} g(\omega_i, a_i) \right)\right],
\end{align*}
where the first equality is obtained by using the change of variable $\lambda \mapsto \lambda / h$, and the last equality follows from \cite{RosSolVie02}, see also \cite[Remark~2.1]{Cha22}. \demo
\label{remark001}
\end{remark}

Remark~\ref{remark001} implies that $V(h) = \sup_\sigma R(\sigma, h).$

\section{Main results}

\begin{theorem}
Let $h \in (0,1)$ and let $\sigma$ be a strategy in $G_h$. Then there exists a strategy $\widehat \sigma$ in $G_1$ such that $$R(\sigma, h) = R(\widehat \sigma, 1).$$
\label{mainlemma}
\vspace{-0.6cm}
\end{theorem}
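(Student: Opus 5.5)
The plan is to realize $G_h$ as a random time change of $G_1$. One stage of $G_1$ will correspond to a geometric block of stages of $G_h$, and the decision maker in $G_1$ will privately simulate the ``idle'' stages of $G_h$.

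\textbf{Decomposition of $G_h$.} By Definition~\ref{klp01}, a transition in $G_h$ can be generated as follows. At each stage an independent coin shows ``heads'' with probability $h$. On heads the next state is drawn from $P(\cdot\mid\omega,a)$; on tails the state stays. The coins are independent of everything else and are never observed. During a block of consecutive tails the state, and hence the signal, is constant. A heads transition may lead to a state with the same signal, so the decision maker in $G_h$ cannot detect block boundaries.

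\textbf{Construction of $\widehat\sigma$.} At each stage of $G_1$, with current $G_1$-history, the decision maker in $G_1$ maintains a simulated $G_h$-history and proceeds as follows.
\begin{enumerate}
\item He draws private coins with success probability $h$ until the first success, at the $K$-th draw; thus $K$ is geometric.
\item For $j=1,\dots,K$ he draws $b_j\sim\sigma(\text{simulated history})$ and appends $(b_j,s)$ to the simulated history for $j<K$, where $s$ is the current $G_1$ signal.
\item He plays $b_K$ in $G_1$.
\item When the new $G_1$ signal $s'$ arrives, he appends $(b_K,s')$ to the simulated history.
\end{enumerate}
Formally this uses a mixed strategy, which is converted into a behavior strategy by Kuhn's theorem; perfect recall holds because the decision maker remembers his own simulated history.

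\textbf{Coupling.} By induction on blocks, the joint law of the real state sequence of $G_1$ together with the simulated history coincides with the law, under $\sigma$ in $G_h$, of the states at heads times together with the full history. Hence the triple (state sequence, actions $b_K$ at heads stages, block structure) is distributed exactly as in $G_h$ observed at its heads stages.

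\textbf{Per-block payoff identity.} Fix a block with state $\omega$ and actions $b_1,b_2,\dots$. The action $b_j$ is chosen before the $j$-th coin is drawn, and the coin is independent of it. Therefore
\begin{align*}
\E\Bigl[\textstyle\sum_{j\le K} g(\omega,b_j)\Bigr] &= \sum_{j\ge1}(1-h)^{j-1}\,\E[g(\omega,b_j)\mid K\ge j],\\
\E[g(\omega,b_K)] &= \sum_{j\ge1} h(1-h)^{j-1}\,\E[g(\omega,b_j)\mid K\ge j].
\end{align*}
Conditionally on the past, the expected block payoff in $G_h$ therefore equals $\frac1h$ times the expected stage payoff in $G_1$. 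The expected block length is also $\frac1h$.

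\textbf{Summing over blocks.} Let $\tau_n$ denote the time of the $n$-th heads in $G_h$. Summing the block identity gives
$$\E_\sigma^h\Bigl(\sum_{i\le\tau_n} g(\omega_i,a_i)\Bigr)=\frac1h\,\E_{\widehat\sigma}^1\Bigl(\sum_{m\le n} g(\omega_m,a_m)\Bigr).$$

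\textbf{Transfer of the liminf (main obstacle).} The definition of $R$ uses a liminf of expectations at deterministic horizons, while the identity above uses the random times $\tau_n$. I plan to compare $\tau_n$ with $T=\lfloor n/h\rfloor$. The increments of $\tau_n$ are i.i.d. geometric and independent of the payoffs' dependence structure except through the bounded sums. With $\|g\|_\infty$ bounded,
$$\Bigl|\E\sum_{i\le\tau_n}g-\E\sum_{i\le T}g\Bigr|\le\|g\|_\infty\,\E|\tau_n-n/h| = O(\sqrt n).$$
Dividing by $n/h$ shows that the Cesàro averages of $G_h$ at time $n/h$ and of $G_1$ at time $n$ differ by $o(1)$. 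Horizons $T$ between $\lfloor n/h\rfloor$ and $\lfloor (n+1)/h\rfloor$ change averages by $O(1/n)$. Hence the two liminfs coincide, which gives $R(\sigma,h)=R(\widehat\sigma,1)$.

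The main technical care is in the measurability behind the per-block identity: one must check that $b_j$ is independent of the $j$-th coin given the history, which holds because the coin is drawn after the action and is unobserved. The uniform $O(\sqrt n)$ control in the liminf transfer also has to be verified.
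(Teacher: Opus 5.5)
Your proof is correct, and its skeleton matches the paper's. Your per-block identity is Lemma~\ref{lemma3}, your comparison of $\tau_n$ with $\lfloor n/h\rfloor$ through $\operatorname{Var}(\tau_n)=n(1-h)/h^2$ is Lemma~\ref{lemma4}, and your bounded-gap interpolation between horizons is Lemma~\ref{lemma5}.

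What differs is the route to the distributional identity of Lemma~\ref{lemma1}. The paper writes down the behavior strategy directly, $\widehat\sigma(\eta_k)(\widehat a)=\P^h_\sigma(a'_{T_k}=\widehat a\mid\mathcal H^{fil}_k=\eta_k)$. It then proves by induction on $k$ that $(\mathcal H^{fil}_k,\omega'_{T_k},a'_{T_k})$ under $\sigma$ and $(\eta_k,\omega_k,a_k)$ under $\widehat\sigma$ have the same law. The inductive step needs Lemma~\ref{aux-lemma}: the idle-stage data $\mathcal H^{rem}$ carry no information about the state beyond $\mathcal H^{fil}$.

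Your private simulation makes the coupling exact by construction. It also shows transparently where deterministic signals are used: appending the current signal $s$ at idle steps. The price is that the work moves into the reduction to a behavior strategy via Kuhn's theorem (in Aumann's infinite-horizon form). That reduction rests on the same fact as Lemma~\ref{aux-lemma}: the private randomization is conditionally independent of the hidden states given past signals and actions. The behavior strategy it produces, $\eta_k\mapsto$ the conditional law of the stage-$k$ action $b_K$ given $\eta_k$, is by your coupling exactly the paper's $\widehat\sigma$, so the two constructions coincide.

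Two small corrections. First, the perfect recall that Kuhn's theorem requires is that of $G_1$ itself (the player recalls his past signals and actions). It is not a property of the simulated history, which is merely the mixed strategy's internal randomness. Second, drawing $K$ before the $b_j$'s, as in your step~1, contradicts your later phrase ``$b_j$ is chosen before the $j$-th coin''. This is harmless: the simulated history from which $b_j$ is drawn does not depend on the $j$-th coin, so $b_j$ and $\{K\ge j\}$ are still independent of it.
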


\begin{corollary}
Let $0 < h_1 < h_2 \le 1$ and let $\sigma$ be a strategy in $G_{h_1}$. Then there exists a strategy $\widehat \sigma$ in $G_{h_2}$ such that 
$$R(\sigma, h_1) = R(\widehat \sigma, h_2).$$
\label{corollary1}
\vspace{-0.6cm}
\end{corollary}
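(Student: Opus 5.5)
Corollary~\ref{corollary1} will follow from Theorem~\ref{mainlemma} by a rescaling argument: we view $G_{h_1}$ as a POMDP with stage duration built on top of $G_{h_2}$, so that the theorem applies with $G_{h_2}$ playing the role of the base POMDP.

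\textbf{Step 1: rewrite the kernel.} Set $k := h_1/h_2 \in (0,1)$. For every $\omega, a$ we have
\begin{align*}
k P_{h_2}(\cdot \mid \omega, a) + (1-k)\delta_\omega(\cdot) &= k h_2 P(\cdot\mid\omega,a) + k(1-h_2)\delta_\omega(\cdot) + (1-k)\delta_\omega(\cdot) \\
&= h_1 P(\cdot \mid \omega, a) + (1-h_1)\delta_\omega(\cdot) = P_{h_1}(\cdot\mid\omega,a).
\end{align*}
Thus, if we regard $G' := G_{h_2} = (\Omega, A, S, f, g, P_{h_2}, p_1)$ as a base POMDP, its POMDP with stage duration $k$ in the sense of Definition~\ref{klp01} is $G'_k = (\Omega, A, S, f, g, P_{h_1}, p_1)$, which is exactly $G_{h_1}$. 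The two processes share the same state, action and signal sets, signal function, payoff function and initial distribution. Hence they have the same histories, the same strategy sets and the same induced laws on plays.

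\textbf{Step 2: check that the payoffs agree.} The functional $R$ uses the undiscounted Cesàro average $\frac1T\sum_{i=1}^T g(\omega_i,a_i)$, with no factor of $h$. It therefore depends only on the underlying POMDP and the strategy. So $R(\sigma, h_1)$ computed in $G_{h_1}$ equals the long-run average payoff of $\sigma$ in $G'_k$. Likewise, the long-run average payoff of any strategy $\tau$ in $G'_1 = G'$ equals $R(\tau, h_2)$ computed in $G_{h_2}$.

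\textbf{Step 3: apply the theorem.} Theorem~\ref{mainlemma} is stated for an arbitrary POMDP $G_1$ with deterministic state-dependent signals, and $G'$ is such a POMDP. Applying it to $G'$ with $h = k \in (0,1)$ gives a strategy $\widehat\sigma$ in $G'_1 = G_{h_2}$ whose long-run average payoff equals that of $\sigma$ in $G'_k$. By Step 2 this is exactly $R(\sigma,h_1) = R(\widehat\sigma,h_2)$.

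The only point needing care, and so the main thing to check, is that the theorem is applied to a genuinely general base POMDP. Its proof must not use any structure specific to the original kernel $P$, such as $P$ not being of the form $h'Q + (1-h')\delta$. Since the theorem is stated for an arbitrary $G_1$, this holds. The remaining work is bookkeeping: the initial distribution and the (undiscounted) definition of $R$ do not depend on $h$.
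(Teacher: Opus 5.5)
Your proposal is correct and follows the paper's proof: the paper likewise writes $P_{h_1} = \left(1 - \frac{h_1}{h_2}\right) \mathrm{Id} + \frac{h_1}{h_2} P_{h_2}$, treats $G_{h_1}$ as the POMDP with stage duration $h_1/h_2$ over the base POMDP $G_{h_2}$, and invokes Theorem~\ref{mainlemma}. Your Step~2 spells out a point the paper leaves implicit: $R$ depends only on the kernel and the strategy, not on $h$ itself. The paper's proof also only writes out the case $h_2<1$, whereas your argument covers $h_2 = 1$ as well.
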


\begin{corollary} The function $h \mapsto V(h)$ is nondecreasing.
\label{corollary2}
\end{corollary}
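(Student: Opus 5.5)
The plan is to get Corollary~\ref{corollary2} directly from Corollary~\ref{corollary1} together with Remark~\ref{remark001}. By Remark~\ref{remark001}, $V(h) = \sup_\sigma R(\sigma, h)$ for every $h \in (0,1]$, where the supremum is over all strategies in $G_h$. So it suffices to show that for $0 < h_1 < h_2 \le 1$, every payoff achievable in $G_{h_1}$ is also achievable in $G_{h_2}$.

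Fix $0 < h_1 < h_2 \le 1$ and an arbitrary strategy $\sigma$ in $G_{h_1}$. Corollary~\ref{corollary1} gives a strategy $\widehat\sigma$ in $G_{h_2}$ with $R(\sigma, h_1) = R(\widehat \sigma, h_2)$. Hence
\[
R(\sigma, h_1) = R(\widehat\sigma, h_2) \le \sup_{\tau} R(\tau, h_2) = V(h_2).
\]
Taking the supremum over $\sigma$ on the left yields $V(h_1) \le V(h_2)$, which is the claim.

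If Corollary~\ref{corollary1} still needs justification from Theorem~\ref{mainlemma}, I would view $G_{h_2}$ as a new base POMDP and observe that $G_{h_1}$ is exactly $G_{h_2}$ with stage duration $h := h_1/h_2 \in (0,1)$. Indeed,
\[
\tfrac{h_1}{h_2}\bigl(h_2 P + (1-h_2)\delta_\omega\bigr) + \bigl(1-\tfrac{h_1}{h_2}\bigr)\delta_\omega = h_1 P + (1-h_1)\delta_\omega = P_{h_1}.
\]
All other data $(\Omega, A, S, f, g, p_1)$ are unchanged, and $R$ does not depend on the discount normalisation. Therefore Theorem~\ref{mainlemma}, applied with base POMDP $G_{h_2}$ and stage duration $h_1/h_2$, gives Corollary~\ref{corollary1}.

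There is no real obstacle in this argument. The only point to check is this composition identity for the transition kernels, so that the "POMDP with stage duration" construction applied to $G_{h_2}$ yields exactly $G_{h_1}$.
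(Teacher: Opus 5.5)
Your proposal is correct and follows the paper's proof: the paper also obtains the corollary from Corollary~\ref{corollary1} combined with the identity $V(h)=\sup_\sigma R(\sigma,h)$ of Remark~\ref{remark001}. It justifies Corollary~\ref{corollary1} with the same kernel identity you use, which exhibits $G_{h_1}$ as the POMDP with stage duration $h_1/h_2$ over the base $G_{h_2}$, so that Theorem~\ref{mainlemma} applies.
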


\begin{corollary} The limit $\lim_{h \to 0} V(h)$ exists.
\label{corollary3}
\end{corollary}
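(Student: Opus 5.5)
The plan is to deduce this from the monotonicity statement in Corollary~\ref{corollary2}, together with a uniform bound on $V$. A monotone and bounded real function on $(0,1]$ has a one-sided limit at $0$, and that limit is its infimum.

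First I would make sure Corollary~\ref{corollary2} is available in the form needed, since it is itself a consequence of Corollary~\ref{corollary1}. Take $0<h_1<h_2\le 1$ and any strategy $\sigma$ in $G_{h_1}$. Corollary~\ref{corollary1} gives a strategy $\widehat\sigma$ in $G_{h_2}$ with $R(\sigma,h_1)=R(\widehat\sigma,h_2)$. Since $V(h)=\sup_\sigma R(\sigma,h)$ by Remark~\ref{remark001}, this yields $R(\sigma,h_1)\le V(h_2)$. Taking the supremum over $\sigma$ gives $V(h_1)\le V(h_2)$, so $h\mapsto V(h)$ is nondecreasing on $(0,1]$.

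Second, I would establish boundedness. Because $\Omega$ and $A$ are finite, $g$ takes values in $[m,M]$ with $m=\min g$ and $M=\max g$. For every $h$, every strategy $\sigma$ and every $T$, the average $\E^h_\sigma\bigl(\frac1T\sum_{i=1}^T g(\omega_i,a_i)\bigr)$ lies in $[m,M]$. Hence its $\liminf$, $R(\sigma,h)$, also lies in $[m,M]$, and so does the supremum $V(h)$. Thus $V$ is bounded below by $m$ uniformly in $h\in(0,1]$.

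Finally, set $L:=\inf_{h\in(0,1]}V(h)$, which is finite and at least $m$. Fix $\varepsilon>0$ and choose $h_0\in(0,1]$ with $V(h_0)<L+\varepsilon$. Monotonicity then gives $L\le V(h)\le V(h_0)<L+\varepsilon$ for all $h\in(0,h_0]$. Therefore $\lim_{h\to0}V(h)=L$, and the limit exists.

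There is no real obstacle once Corollary~\ref{corollary2} is in hand. The only point needing care is that $V(h)$ is genuinely a finite number for each $h$. This holds because Remark~\ref{remark001} identifies $V(h)$ with a supremum of quantities confined to $[m,M]$, so the limit in $\lambda$ defining $V(h)$ exists and is bounded.
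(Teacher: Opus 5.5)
Your proof is correct and is essentially the paper's argument. The paper also deduces the corollary from the monotonicity in Corollary~\ref{corollary2} together with the boundedness of $g$; you simply spell out the monotone-bounded-limit step. One small point: the existence of the $\lambda$-limit defining $V(h)$ comes from the result cited in Remark~\ref{remark001}, not from boundedness alone.
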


\begin{remark} We make a few comments regarding the function $h \mapsto V(h)$.
\begin{enumerate}
	\item In the case of a fully observed state, the function $h \mapsto V(h)$ is constant (see \S\ref{final1}).
	\item The function $h \mapsto V(h)$ is lower semi-continuous on $(0,1)$ and left-continuous on $(0,1)$ (see \S\ref{final2}).
	\item It is crucial that the signal given to the decision maker is deterministic. Otherwise, the function $h \mapsto V(h)$ need not be nondecreasing (see \S\ref{grana1}). \demo
\end{enumerate}

\end{remark}

\section{Proof of Theorem~1}

\subsection{Construction of the strategy $\widehat \sigma$}

We first construct the strategy $\widehat \sigma$. We fix $h \in (0,1)$ and a strategy $\sigma$ in $G_h$.

\begin{definition} \leavevmode
\begin{enumerate}
\item We denote by $X = (X_1, X_2, \ldots)$ the stochastic process of i.i.d. random variables, where 
$$\Big(\P(X_i = 1) = h \quad \text{ and } \quad \P(X_i = 0) = 1 - h \Big) \iff X_i \sim Bernoulli(h).$$
\item We denote by $T = (T_0, T_1, T_2, \ldots)$ the stochastic process, in which $T_i$ is the random variable
\begin{align*}
	T_0 &= 0; \\
	T_i &= \inf\{n > T_{i - 1} \mid X_n = 1\} \quad \text{ for } i>0.
\end{align*}
\item We denote by $N = (N_1, N_2, \ldots)$ the stochastic process of i.i.d. random variables, where 
\[ \Big(N_i = T_i - T_{i-1}\Big) \iff \forall i \in \N^* \;\; N_i = k \text{ with probability } h (1-h)^{k-1} \iff N_i \sim Geometric(h). \demoo \]
\end{enumerate}
\end{definition}

\begin{remark}[Interpretation of $X, T,$ and $N$]
In $G_h$, the next state is chosen according to $$h P(\cdot \mid \omega, a) + (1-h) \delta_\omega (\cdot).$$
This means that at the end of each stage, the next state is drawn according to $P$ with probability $h$; otherwise, the state remains unchanged (drawn according to $\delta_\omega$) with probability $1-h$. Hence:
\begin{enumerate}
	\item $X_i$ indicates whether the transition at the end of the $i$-th stage is governed by $P$ (if $X_i = 1$) or by $\delta_\omega$ (if $X_i = 0$).
	\item $T_i$ is the stage number at which the state transition is governed by $P$ for the $i$-th time.
	\item $N_i$ is the duration of the $i$-th epoch. It represents the number of stages between the $(i-1)$-th and $i$-th time the transition is governed by $P$, consisting of $N_i - 1$ self-transitions ($\delta_\omega$) followed by one $P$-transition. \demo
\end{enumerate}
\end{remark}

\begin{remark}
This remark summarizes some useful properties of $T$ and $N$ that will be used in subsequent proofs. For $N$, we have
$$\P(N_j \ge m) = (1-h)^{m-1} \quad \text{ and } \quad \E(N_j) = \frac 1 h \quad \text{ and } \quad \operatorname{Var}(N_j) = \frac{1-h}{h^2}.$$
For $T$, we have
\[T_j = \sum_{i = 1}^{j} N_i \quad \text{ and } \quad \E(T_j) = \frac j h \quad \text{ and } \quad \operatorname{Var}(T_j) = \frac{1-h}{h^2} j. \demoo \]
\end{remark}

Fix $k \in \N^*$. For a fixed infinite history $\mathcal H = (s'_1, a'_1, s'_2, \ldots)$ in $G_h$, 
we denote the filtered history $\mathcal H^{fil}_k$ as the random vector $$\mathcal H^{fil}_k = (s'_1, a'_{T_1}, s'_{T_1 + 1}, a'_{T_2}, \ldots, a'_{T_{k-1}}, s'_{T_{k-1}+1}).$$ 

The probability measure $\P_\sigma^h$ is defined on an extended joint probability space that encompasses both the histories of $G_h$ and the auxiliary stochastic processes $X, T,$ and $N$.
Similarly, we use $\E_\sigma^h$ to denote the expectation of random variables defined on this probability space.

\begin{definition}
Let $\eta_k = (s_1, a_1, s_2, \ldots, a_{k-1}, s_{k})$ be a history of length $k$ in $G_1$, and let $\widehat a \in A$. The strategy $\widehat{\sigma}$ is defined by
\begin{align*}
\widehat{\sigma}(\eta_k)(\widehat a)
&:=
\P^h_\sigma(a'_{T_k} = \widehat a \mid \mathcal H^{fil}_k = \eta_k).
\end{align*}
If $\P^h_\sigma(\mathcal H^{fil}_k = \eta_k) = 0$, then we define $\widehat{\sigma}(\eta_k)$ as an arbitrary fixed mixed action. \demo
\end{definition}

\begin{remark}[Intuition behind the construction of $\widehat{\sigma}$]
The strategy $\widehat{\sigma}$ is designed to simulate in $G_1$ the strategy $\sigma$ from $G_h$. In $G_h$, the state only truly transitions (according to $P$) at the random stages $T_1, T_2, \ldots$. Between these stages, the state is frozen.

The decision maker in $G_1$ does not experience these frozen periods; instead, every stage in $G_1$ corresponds to a true transition. Therefore, to replicate $\sigma$, the decision maker in $G_1$ examines the current history $\eta_k$ and asks: \textit{``If I were playing $G_h$ and the sequence of true transitions perfectly matched $\eta_k$, what action would I play at the (random and unobserved) moment $T_k$ when the state is finally about to change?''}

As the durations of the frozen periods ($N_i$) and the intermediate actions taken during them are absent in $G_1$, $\widehat{\sigma}$ effectively ``filters out'' this information. It calculates the expected mixed action that $\sigma$ would play at stage $T_k$, conditioned on the fact that the filtered history $\mathcal{H}^{fil}_k$ of true transitions aligns with the history $\eta_k$ observed in $G_1$. \demo
\end{remark}

\subsection{Proof of Theorem 1 and Corollaries~\ref{corollary1}-\ref{corollary3} modulo technical lemmas}

We first state four technical lemmas.

\begin{lemma}
For each $k \in \N^*$, we have
$$\E_{\widehat \sigma}^1 g(\omega_k, a_k) = \E_\sigma^h g(\omega_{T_k}, a_{T_k}).$$
\label{lemma1}
\end{lemma}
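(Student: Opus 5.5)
We will prove a stronger, distributional statement and then read off Lemma~\ref{lemma1}: for every $k \in \N^*$, the joint law of $(\omega_1, a_1, \omega_2, \ldots, \omega_{k}, a_k)$ under $\P^1_{\widehat\sigma}$ equals the joint law of
$$(\omega_{1}, a_{T_1}, \omega_{T_1+1}, a_{T_2}, \ldots, \omega_{T_{k-1}+1}, a_{T_k})$$
under $\P^h_\sigma$. Since the state is frozen on each epoch, $\omega_{T_{j-1}+1} = \omega_{T_{j-1}+2} = \cdots = \omega_{T_j}$. In particular $\omega_{T_k} = \omega_{T_{k-1}+1}$, so this claim yields the equality of the laws of $(\omega_k, a_k)$ and $(\omega_{T_k}, a_{T_k})$. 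The lemma then follows by taking expectations of $g$.

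The proof is by induction on $k$, via a one-step Markov-chain-type decomposition. For $k=1$, under both measures $\omega_1 \sim p_1$. Conditionally on $\omega_1$ (equivalently on $\mathcal H^{fil}_1 = f(\omega_1)$ together with $\omega_1$), the law of $a_{T_1}$ in $G_h$ must be compared with $\widehat\sigma(f(\omega_1))$. The key point is that, given the filtered history $\mathcal H^{fil}_k$, the extra information carried by the true state sequence $\omega_1, \omega_{T_1+1}, \ldots, \omega_{T_{k-1}+1}$ is irrelevant for the decision maker's action $a'_{T_k}$. Indeed, the decision maker's actions depend only on signals, and each signal is a deterministic function $f$ of the state. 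We will make this precise by proving the conditional independence
$$\P^h_\sigma\big(a_{T_k} = \widehat a \,\big|\, \omega_1, \omega_{T_1+1},\ldots,\omega_{T_{k-1}+1}, \mathcal H^{fil}_k\big) = \P^h_\sigma\big(a_{T_k} = \widehat a \,\big|\, \mathcal H^{fil}_k\big).$$
The right-hand side is exactly $\widehat\sigma(\mathcal H^{fil}_k)(\widehat a)$ by definition.

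The inductive step has two parts. First, given the filtered trajectory up to $(\omega_{T_{k-1}+1}, a_{T_k})$ and everything else in the past, the next state $\omega_{T_k+1}$ is distributed according to $P(\cdot \mid \omega_{T_k}, a_{T_k})$. This holds because at stage $T_k$ we have $X_{T_k} = 1$, and the $X$'s are independent of the play up to that stage, so the transition is governed by $P$. This matches the $G_1$ transition $P(\cdot\mid \omega_k, a_k)$. Second, the action step uses the conditional independence above. Multiplying these conditional kernels and using the inductive hypothesis gives equality of the joint laws at length $k+1$.

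The main obstacle is justifying the conditional independence displayed above, together with the fact that conditioning on the event $\{T_k = n\}$ does not distort the transition. The concern is that $T_k$ is a random time interleaved with play, and the decision maker's actions inside an epoch may depend on how long the state has been frozen (he observes the repeated signals). I would handle this by writing $\P^h_\sigma$ explicitly as a sum over realizations of $(N_1, \ldots, N_k)$ and of the intermediate actions. On each such realization, the probability factorizes into three parts: products of $h(1-h)^{N_i-1}$, the $\sigma$-probabilities of actions, which depend on the states only through the signals $f(\omega)$, and the $P$-transitions at the times $T_i$. Summing out the intermediate actions and durations then shows that the conditional law of $a_{T_k}$ given the full state path depends on that path only through $(f(\omega_1), a_{T_1}, \ldots, f(\omega_{T_{k-1}+1}))$, which is exactly $\mathcal H^{fil}_k$. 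Determinism of the signal is used precisely here, since under frozen states the observed signals inside an epoch are just repetitions of $f(\omega_{T_{j-1}+1})$. The zero-probability histories where $\widehat\sigma$ is arbitrary carry zero mass under both measures and do not affect the argument.
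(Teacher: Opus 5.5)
Your proof is correct. Its skeleton matches the paper's: induction on $k$, a transition step at the random time $T_k$, and an action step showing that the conditional law of $a'_{T_k}$ reduces to $\widehat\sigma(\mathcal H^{fil}_k)$. The differences lie in what you carry through the induction and in how the key conditional independence is justified.

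The paper's invariant \eqref{rooss10} involves only the signal history $\eta_k$ and the current state. You propagate the joint law of the whole filtered state--action path instead. This is stronger than needed but costs nothing, since the transition step uses only the current state and action.

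For the action step, the paper combines Lemma~\ref{aux-lemma} (conditional independence of $\mathcal H^{rem}_{k+1}$ and $\omega'_{T_k+1}$ given $\mathcal H^{fil}_{k+1}$) with a Bayes/total-probability computation, and it argues that lemma only verbally. Your approach sums path probabilities over the durations and intermediate actions. This writes them as $p_1(\omega'_1)\prod_j P(\omega'_{T_j+1}\mid \omega'_{T_j},a'_{T_j})$ times a factor that depends on the states only through their signals. That factorization gives the needed conditional independence directly, and it also proves Lemma~\ref{aux-lemma} rigorously.

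You also make explicit why conditioning on $\{T_k=n\}$ does not distort the transition: $X_{T_k}$ is independent of play up to stage $T_k$. The paper uses this point silently in \eqref{roo10x2}.
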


\begin{lemma} For each $k \in \N^*$, we have
$$\E_{\sigma}^h \left( \sum_{j = T_{k-1}+1}^{T_k} g(\omega_j, a_j) \right) = \frac 1 h \E_\sigma^h g(\omega_{T_k}, a_{T_k}).$$
\label{lemma3}
\end{lemma}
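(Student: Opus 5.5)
The plan is to write both sides as sums over individual stages and compare them term by term. The only probabilistic fact needed is that the coin $X_j$, which decides whether the transition at the end of stage $j$ is governed by $P$, is independent of everything that has happened up to and including the choice of $a_j$. The payoff $g$ is bounded, and $T_k$ is integrable with $\E(T_k)=k/h$. So every sum below is dominated by $\|g\|_\infty T_k$, and exchanging expectation and summation is justified by Fubini/dominated convergence.

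\textbf{Step 1 (set up the extended probability space).} I would make the construction of $\P^h_\sigma$ explicit. Take i.i.d.\ $X_1,X_2,\ldots \sim Bernoulli(h)$, draws from $P$, and the randomization of $\sigma$, all mutually independent. The states are then generated by the rule: if $X_j=1$, then $\omega_{j+1}$ is drawn from $P(\cdot\mid\omega_j,a_j)$; otherwise $\omega_{j+1}=\omega_j$. This produces exactly the transition $P_h$, so it is a legitimate realization of $G_h$. Let $\mathcal F_j$ be the $\sigma$-field generated by $\omega_1,\ldots,\omega_j$, $a_1,\ldots,a_j$ and $X_1,\ldots,X_{j-1}$. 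By construction, $X_j$ is independent of $\mathcal F_j$.

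\textbf{Step 2 (stage decomposition).} Define $E_j:=\{T_{k-1}<j\le T_k\}=\{T_{k-1}\le j-1\}\cap\{T_k\ge j\}$. This event depends only on $X_1,\ldots,X_{j-1}$, so $E_j\in\mathcal F_j$. Then
\[\sum_{j=T_{k-1}+1}^{T_k} g(\omega_j,a_j)=\sum_{j\ge1} g(\omega_j,a_j)\mathbf 1_{E_j}.\]
Moreover, $T_k=j$ holds exactly when $E_j$ occurs and $X_j=1$, so
\[g(\omega_{T_k},a_{T_k})=\sum_{j\ge1} g(\omega_j,a_j)\mathbf 1_{E_j}X_j.\]

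\textbf{Step 3 (use independence).} The variable $g(\omega_j,a_j)\mathbf 1_{E_j}$ is $\mathcal F_j$-measurable, and $X_j$ is independent of $\mathcal F_j$ with mean $h$. Hence
\[\E^h_\sigma\left[g(\omega_j,a_j)\mathbf 1_{E_j}X_j\right]=h\,\E^h_\sigma\left[g(\omega_j,a_j)\mathbf 1_{E_j}\right].\]
Summing over $j$ and exchanging sum and expectation, as justified above, gives
\[\E^h_\sigma g(\omega_{T_k},a_{T_k})=h\,\E^h_\sigma\sum_{j=T_{k-1}+1}^{T_k}g(\omega_j,a_j),\]
which is the claim.

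The main obstacle is Step 1, namely stating precisely that the decision maker's action at stage $j$ cannot anticipate $X_j$. This is true because $a_j$ depends only on signals up to stage $j$, and these depend only on $X_1,\ldots,X_{j-1}$, the $P$-draws and the randomization of $\sigma$. I would make this explicit through the coupling described above rather than argue it abstractly. Note that the argument does not use the fact that the state is frozen during the epoch. That fact is needed only to identify these quantities with $G_1$, which is the content of Lemma~\ref{lemma1}.
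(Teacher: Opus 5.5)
Your proof is correct and follows essentially the same route as the paper. Both arguments write the two sides as stage-by-stage sums and extract the factor $h$ from the independence of the transition coin at a stage from everything determined up to the action at that stage; the paper does this via $\{N_k = m\} = \{N_k \ge m\}\cap\{X_{T_{k-1}+m}=1\}$. Your indexing by the absolute stage $j$, with $E_j$ determined by $X_1,\ldots,X_{j-1}$ and the explicit filtration $\mathcal F_j$, makes the independence step slightly more rigorous. It avoids the paper's informal assertion that the coin $X_{T_{k-1}+m}$, taken at a random time, is independent of $\mathbf 1_{\{N_k\ge m\}}\, g(\omega_{T_{k-1}+m},a_{T_{k-1}+m})$.
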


\begin{lemma}
Let $t_k = \lfloor k/h \rfloor$.
For each $k \in \N^*$, we have
$$
\liminf_{k \to +\infty} \E_{\sigma}^h\left( \frac{1}{t_k} \sum_{j = 1}^{t_k} g(\omega_j, a_j) \right)
=
\liminf_{k \to +\infty} \E_{\sigma}^h\left( \frac{h}{k} \sum_{j = 1}^{T_k} g(\omega_j, a_j) \right).
$$
\label{lemma4}
\end{lemma}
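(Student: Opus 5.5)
We compare the two averages by a concentration argument for $T_k$ around its mean $k/h$, using only that $g$ is bounded. Set $M := \max_{\omega,a} |g(\omega,a)|$, which is finite because $\Omega$ and $A$ are finite. The plan is to show that
$$\Delta_k := \E_\sigma^h\left|\sum_{j=1}^{t_k} g(\omega_j,a_j) - \sum_{j=1}^{T_k} g(\omega_j,a_j)\right| = o(k).$$
Once we have this, the two averages in the statement differ by at most $\Delta_k/t_k + \bigl|\tfrac{1}{t_k}-\tfrac{h}{k}\bigr|\,\E_\sigma^h\bigl|\sum_{j=1}^{T_k} g(\omega_j,a_j)\bigr|$, and this quantity tends to $0$. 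The two $\liminf$'s therefore coincide, because sequences whose difference tends to zero have the same $\liminf$.

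The key step is bounding $\Delta_k$. The two sums share all terms up to index $\min(t_k,T_k)$, so their difference has at most $|T_k - t_k|$ terms. Each term is bounded by $M$ in absolute value, which gives the pathwise bound $M|T_k - t_k|$ and hence $\Delta_k \le M\,\E|T_k - t_k|$. Since $t_k = \lfloor k/h\rfloor$, we have $|T_k - t_k| \le |T_k - k/h| + 1$. By Cauchy--Schwarz and the variance formula in the remark on $T$ and $N$,
$$\E|T_k - k/h| \le \sqrt{\operatorname{Var}(T_k)} = \frac{\sqrt{(1-h)k}}{h}.$$
Combining these, $\Delta_k \le M\bigl(\sqrt{(1-h)k}/h + 1\bigr) = O(\sqrt{k}) = o(k)$. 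Dividing by $t_k \sim k/h$ sends this term to $0$.

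For the second term, note that $\E_\sigma^h\bigl|\sum_{j=1}^{T_k} g(\omega_j,a_j)\bigr| \le M\,\E T_k = Mk/h$. Also $|1/t_k - h/k| = |k - h t_k|/(k t_k) \le h/(k t_k)$, since $0 \le k - h t_k < h$. The product is therefore at most $Mh/(h\,t_k) = M/t_k \to 0$.

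The one point that needs care, and the one I expect to be the main obstacle, is justifying the pathwise comparison when $T_k$ is random on the extended probability space. We must check that $\sum_{j=1}^{T_k} g(\omega_j,a_j)$ is integrable, which follows from $|\cdot| \le M T_k$ and $\E T_k < \infty$. We must also check that the coupling of $X$ with the play of $G_h$ does not matter. It does not, because the bound $|\sum_{j=1}^{t_k} - \sum_{j=1}^{T_k}| \le M|T_k - t_k|$ holds realization by realization, whatever the joint law is. We only need the marginal law of $T_k$, a sum of $k$ i.i.d.\ $Geometric(h)$ variables, for the variance computation.
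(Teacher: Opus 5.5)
Your proof is correct and follows essentially the paper's argument: the pathwise bound $M|T_k - t_k|$, the estimate $\E|T_k - k/h| \le \sqrt{\operatorname{Var}(T_k)}$, and a two-term split of the difference of the averages. The only cosmetic difference is that you split around $\frac{1}{t_k}$ and put the coefficient error $\bigl|\frac{1}{t_k}-\frac{h}{k}\bigr|$ on $\sum_{j=1}^{T_k} g(\omega_j,a_j)$, bounding it via $\E T_k = k/h$. The paper instead splits around $\frac{h}{k}$ and puts that error on the deterministic-length sum $\sum_{j=1}^{t_k} g(\omega_j,a_j)$, which it bounds by $M t_k$.
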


\begin{lemma}
Let $M \in \N^*$. Let $\{x_n\}_{n=1}^\infty$ be a sequence and $\{x_{n_k}\}_{k=1}^\infty$ be a  subsequence of $\{x_n\}_{n=1}^\infty$. Suppose that
$$	\lim_{j \to \infty} (x_{j+1} - x_j) = 0 \qquad \text{ and } \qquad |n_{j+1} - n_j| \le M \text{ for all } j \in \N^*.$$
We then have
$$\liminf_{n \to \infty} x_n = \liminf_{k \to \infty} x_{n_k}.$$
\label{lemma5}
\end{lemma}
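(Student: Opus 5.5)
We prove the statement by comparing the liminf along the subsequence with the liminf along the whole sequence directly, in both directions.

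\emph{Easy inequality.} Since $\{x_{n_k}\}$ is a subsequence of $\{x_n\}$, every subsequential limit of $\{x_{n_k}\}$ is also one of $\{x_n\}$. Hence $\liminf_{n} x_n \le \liminf_{k} x_{n_k}$. This holds in the extended reals, with no hypotheses needed.

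\emph{Reverse inequality.} Set $L = \liminf_n x_n$. If $L = +\infty$ we are done, so assume $L<+\infty$. Fix $\varepsilon>0$; we aim to show $\liminf_k x_{n_k} \le L+\varepsilon$, or that it equals $-\infty$ when $L=-\infty$. We proceed in three steps.
\begin{enumerate}
\item By the first hypothesis, choose $J$ such that $|x_{j+1}-x_j| \le \varepsilon/M$ for all $j \ge J$.
\item By definition of $\liminf$, there are infinitely many indices $m \ge \max(J,n_1)$ with $x_m \le L+\varepsilon$ (with $L+\varepsilon$ replaced by an arbitrary real $-C$ if $L=-\infty$).
\item For each such $m$, the subsequence indices are strictly increasing, unbounded, and have gaps at most $M$. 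So there is $k$ with $n_k \le m < n_{k+1}$, and therefore $0 \le m-n_k < M$.
\end{enumerate}
Telescoping over at most $M$ consecutive increments, each of size at most $\varepsilon/M$, gives
$$|x_{n_k}-x_m| \le \sum_{j=n_k}^{m-1}|x_{j+1}-x_j| \le \varepsilon.$$
Hence $x_{n_k} \le L+2\varepsilon$.

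As $m$ ranges over infinitely many indices, the corresponding $k$ also tend to infinity, because $n_k > m-M$. This yields infinitely many terms of the subsequence lying below $L+2\varepsilon$, so $\liminf_k x_{n_k} \le L+2\varepsilon$. Since $\varepsilon$ was arbitrary, the reverse inequality follows. The case $L=-\infty$ is identical, with $-C$ in place of $L+\varepsilon$ and $C$ arbitrary.

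\emph{Main obstacle.} The only delicate point is locating, for each good index $m$, a subsequence index $n_k$ within distance $M$ of $m$ and at or below it. This uses that $n_k$ is strictly increasing and $n_1$ finite, so that the interval $[n_1,\infty)$ is covered by the intervals $[n_k,n_{k+1})$. We must also check that these $k$ go to infinity, so that we obtain genuinely infinitely many subsequence terms rather than repeats of one.
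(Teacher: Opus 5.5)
Your argument is correct and is essentially the paper's proof in $\varepsilon$-form: the paper takes a subsequence $x_{m_j}\to\liminf_n x_n$, pairs each $m_j$ with the largest subsequence index $n_{k_j}\le m_j$, and telescopes using $\sup_{i\ge n_{k_j}}|x_{i+1}-x_i|\to 0$, which is what you do. One small indexing slip needs fixing. The condition $m\ge\max(J,n_1)$ does not ensure $n_k\ge J$, since you only know $n_k>m-M$, so some increments in your telescoping sum are not controlled; require $m\ge\max(J+M,n_1)$ instead, which only discards finitely many good indices $m$.
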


\begin{proof}[Proof of Theorem~\ref{mainlemma} modulo technical lemmas]
To show that $R(\sigma, h) = R(\widehat \sigma, 1)$, we manipulate the expressions for $R(\sigma, h)$ and $R(\widehat \sigma, 1)$.

We start with $R(\widehat \sigma, 1)$. By Lemma~\ref{lemma1}, we have
\begin{equation}
\begin{aligned}
R(\widehat \sigma, 1) = \liminf_{T \to +\infty} \E_{\widehat \sigma}^1 \left( \frac{1}{T} \sum_{k = 1}^{T} g(\omega_k, a_k) \right) 
&= \liminf_{T \to +\infty} \left( \frac{1}{T} \sum_{k = 1}^{T} \E_{\widehat \sigma}^1 g(\omega_k, a_k) \right) \\
&= \liminf_{T \to +\infty} \left( \frac{1}{T} \sum_{k = 1}^{T} \E_\sigma^h g(\omega_{T_k}, a_{T_k}) \right).
\end{aligned}
\label{ahaha5}
\end{equation}

We now aim to compute $R(\sigma, h).$ 

Consider the sequence $\{x_t\}_ {t=1}^\infty$ and the subsequence $\{x_{t_k}\}_ {k=1}^\infty$ of $\{x_t\}_ {t=1}^\infty$, where
$$x_t := \E_{\sigma}^h \left( \frac{1}{t} \sum_{i = 1}^{t} g(\omega_i, a_i) \right) \quad \text{ and } \quad t_k = \lfloor k/h \rfloor.$$

Let $M=\sup_{\omega, a} g(\omega, a)$. We have 
\begin{equation}
\begin{aligned}
\left|x_{t+1} - x_t\right| 
&=
\left|\E_{\sigma}^h \left( \frac{1}{t+1} \sum_{i = 1}^{t+1} g(\omega_i, a_i) - \frac{1}{t} \sum_{i = 1}^{t} g(\omega_i, a_i) \right) \right|  \\
&=
\left|\frac{-1}{t(t+1)} \sum_{i = 1}^{t} \E_{\sigma}^h g(\omega_i, a_i) + \frac{1}{t+1} \E_{\sigma}^h g(\omega_{t+1}, a_{t+1})\right| \le \frac{M t}{t(t+1)} + \frac{M}{t+1} \xrightarrow{t \to \infty} 0.
\end{aligned}
\label{ahaha0001}
\end{equation}

We also have for any $k \in \N^*$
\begin{equation}
\left|t_{k+1} - t_k\right| \le 1 + \frac 1 h.
\label{ahaha0002}
\end{equation}

Now Lemma~\ref{lemma5}, together with \eqref{ahaha0001} and \eqref{ahaha0002}, implies
\begin{equation}
R(\sigma, h) = \liminf_{T \to +\infty} \E_{\sigma}^h \left( \frac{1}{T} \sum_{i = 1}^{T} g(\omega_i, a_i) \right) = \liminf_{k \to +\infty} \E_{\sigma}^h\left( \frac{1}{t_k} \sum_{j = 1}^{t_k} g(\omega_j, a_j) \right),
\label{ahaha0}
\end{equation}

By Lemmas \ref{lemma3} and \ref{lemma4}, we have
\begin{equation}
\begin{aligned}
\liminf_{k \to +\infty} \E_{\sigma}^h\left( \frac{1}{t_k} \sum_{j = 1}^{t_k} g(\omega_j, a_j) \right)
=
\liminf_{k \to +\infty} \E_{\sigma}^h\left( \frac{h}{k} \sum_{j = 1}^{T_k} g(\omega_j, a_j) \right) 
&=
\liminf_{K \to +\infty} \E_{\sigma}^h\left( \frac{h}{K} \sum_{k = 1}^{K} \sum_{j = T_{k-1}+1}^{T_k} g(\omega_j, a_j) \right)
\\
&= \liminf_{K \to +\infty} \left( \frac{1}{K} \sum_{k = 1}^{K} \E_\sigma^h g(\omega_{T_k}, a_{T_k}) \right).
\end{aligned}
\label{ahaha2}
\end{equation}

By combining \eqref{ahaha5}, \eqref{ahaha0}, and \eqref{ahaha2}, we obtain
$$R(\sigma, h) = R(\widehat \sigma, 1).\qedhere$$
\end{proof}

\begin{proof}[Proof of Corollary~\ref{corollary1}]
Note that given $h_1, h_2$ with $0 < h_1 < h_2 < 1$, we consider $G_{h_1}$ as the POMDP with stage duration $h_1$ relative to the base POMDP $G_1$. However, we can also consider $G_{h_1}$ as the POMDP with stage duration $h_1 / h_2$ relative to the base POMDP $G_{h_2}$. Indeed, for the transition law $P_{h_1}$ in $G_{h_1}$, we have
$$P_{h_1} = (1 - h_1) Id + h_1 P_1 = \left(1 - \frac {h_1} {h_2}\right) Id + \frac{h_1}{h_2} (\left(1 - h_2\right) Id + h_2 P_1) = \left(1 - \frac {h_1} {h_2}\right) Id + \frac{h_1}{h_2} P_{h_2}.$$

Consequently, Theorem~\ref{mainlemma} implies that there is a strategy $\widehat \sigma$ in $G_{h_2}$ such that
$$R(\sigma, h_1) = R(\widehat \sigma, h_2). \qedhere$$
\end{proof}

\begin{proof}[Proof of Corollary~\ref{corollary2}]
This follows from Corollary~\ref{corollary1} and Remark~\ref{remark001}. For any $0 < h_1 \le h_2 \le 1$, we have
$$V(h_1) = \sup\{R(\sigma, h_1)) \; | \; \sigma \text{ is a strategy in }G_{h_1} \} \le  \sup\{R(\sigma, h_2) \; | \; \sigma \text{ is a strategy in }G_{h_2} \} = V(h_2). \qedhere$$
\end{proof}

\begin{proof}[Proof of Corollary~\ref{corollary3}]
This follows directly from Corollary~\ref{corollary2} and the fact that the stage payoff function $g$ is bounded.
\end{proof}

\subsection{Proof of Lemma~\ref{lemma1}}

To avoid ambiguity between the two distinct stochastic processes, we adopt the following notational convention in the proof of Lemma~\ref{lemma1}. 
The state $\omega_{k}$ (resp. action $a_{k}$, signal $s_{k}$) refers exclusively to the $k$-th stage state (resp. action, signal) in $G_{1}$.
Conversely, the state $\omega'_{k}$ (resp. action $a'_{k}$, signal $s'_{k}$) refers exclusively to the $k$-th stage state (resp. action, signal) in $G_{h}$.

In what follows, when manipulating conditional probabilities, we adopt the standard convention that $0 \times \text{undefined} = 0$. Consequently, whenever we evaluate a conditional probability of the form $\mathbb{P}(A \mid B)$, it is implicitly assumed that $\mathbb{P}(B) > 0$. In cases where $\mathbb{P}(B) = 0$, the conditional probability $\mathbb{P}(A \mid B)$ is technically undefined; however, because such terms only appear multiplied by $\mathbb{P}(B) = 0$ (such as in the law of total probability or Bayes' theorem), they contribute zero to the overall expression. Thus, we do not need to explicitly evaluate or restrict our summations to exclude these zero-probability events.

To prove Lemma~\ref{lemma1}, we first state and prove the following technical lemma.

\begin{lemma}
Let $k \in \N_0, \omega \in \Omega$, and let: 
\begin{itemize}
	\item $\eta_{k+1}$ be a history of length $k+1$ in $G_1$;
	\item $\eta'_{k+1}$ be a history of length $T_{k+1}$ in $G_h$;
	\item $\mathcal H^{fil}_{k+1}$ be the filtered history of length $k+1$ (as defined previously);
	\item $\mathcal H^{rem}_{k+1}$ be the supplementary information in $\eta'_{k+1}$ not present in $\mathcal{H}_{k+1}^{fil}$.
\end{itemize}

Then $\mathcal{H}_{k+1}^{rem}$ is conditionally independent of $\omega'_{T_{k}+1}$ given $\mathcal H^{fil}_{k+1} = \eta_{k+1}$. That is,
	\begin{align*}
		\P_{\sigma}^h (\mathcal H^{rem}_{k+1} \mid \mathcal H^{fil}_{k+1} = \eta_{k+1}, \omega'_{T_k + 1} = \omega) &= \P_{\sigma}^h (\mathcal H^{rem}_{k+1} \mid \mathcal H^{fil}_{k+1} = \eta_{k+1})\\
		&\text{and}\\
		\P_{\sigma}^h(\omega'_{T_{k}+1} = \omega \mid \mathcal{H}_{k+1}^{rem}, \mathcal H^{fil}_{k+1} = \eta_{k+1}) &= \P_{\sigma}^h(\omega'_{T_{k}+1} = \omega \mid \mathcal H^{fil}_{k+1} = \eta_{k+1}).
	\end{align*}
\label{aux-lemma}
\end{lemma}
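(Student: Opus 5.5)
The plan is to prove that the joint law of $(\mathcal H^{fil}_{k+1}, \mathcal H^{rem}_{k+1}, \omega'_{T_k+1})$ factorizes into a term depending only on $(\eta_{k+1}, \mathcal H^{rem}_{k+1})$ times a term depending only on $(\eta_{k+1}, \omega)$. Both displayed equalities then follow at once, since each is equivalent to conditional independence given $\mathcal H^{fil}_{k+1}=\eta_{k+1}$. That is, I will show
\[
\P_\sigma^h\bigl(\mathcal H^{fil}_{k+1}=\eta_{k+1},\ \mathcal H^{rem}_{k+1}=r,\ \omega'_{T_k+1}=\omega\bigr)=F(\eta_{k+1},r)\,G(\eta_{k+1},\omega),
\]
and then divide by the marginal (using the convention $0\times\text{undefined}=0$ for null events).

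First I will make $\mathcal H^{rem}_{k+1}$ explicit. It consists of three pieces of data:
\begin{itemize}
\item the epoch lengths $N_1,\dots,N_k$, together with the length of the partially observed $(k+1)$-th epoch;
\item the actions $a'_j$ at the non-transition stages $j\notin\{T_1,\dots,T_k\}$;
\item the signals at the frozen stages.
\end{itemize}
The key observation uses the deterministic signal $f$. Between $T_{i-1}+1$ and $T_i$ the state is frozen at $\omega'_{T_{i-1}+1}$, so every signal in that block equals $s'_{T_{i-1}+1}$, which is already a coordinate of $\mathcal H^{fil}$. Hence the intermediate signals are a deterministic function of $(\eta_{k+1},N_1,\dots,N_{k+1})$. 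Any $r$ that violates this constraint has probability zero for every $\omega$, so it trivially factorizes with $F=0$.

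Next I will write the probability of a full history of length $T_{k+1}$, together with the realization of $X$, as a sum over the hidden "true-state" sequence $\theta_1=\omega'_{1},\ \theta_2=\omega'_{T_1+1},\dots,\theta_{k+1}=\omega'_{T_k+1}$. The product splits into two groups.
\begin{itemize}
\item The Bernoulli factors $h^{k}(1-h)^{\sum_i (N_i-1)}$ (and the corresponding factor for the incomplete last epoch), together with all factors $\sigma(\text{history up to }j)(a'_j)$ for $j\le T_{k+1}-1$. Each history fed to $\sigma$ is a function of $(\eta_{k+1},r)$ alone, and these factors do not involve the $\theta$'s. Their product gives $F(\eta_{k+1},r)$.
\item The terms $p_1(\theta_1)\prod_{i=1}^{k}P(\theta_{i+1}\mid\theta_i,a_{i})\prod_{i=1}^{k+1}\mathbf 1\{f(\theta_i)=s_i\}$, where $a_i=a'_{T_i}$ and $s_i$ are read off $\eta_{k+1}$. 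Self-transitions within an epoch contribute no state-dependent factor, since $\delta_\omega$ is forced and its weight $(1-h)$ is already counted in $X$. Summing this group over $\theta_1,\dots,\theta_k$ with $\theta_{k+1}=\omega$ fixed gives $G(\eta_{k+1},\omega)$.
\end{itemize}
A point to check here is the independence of $X$ from the states and actions. This is a modelling choice of the extended probability space: the "$P$ vs.\ $\delta$" coin is tossed independently of everything else. With that choice, the factor $hP(\cdot)+(1-h)\delta$ decomposes exactly as claimed.

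The main obstacle I anticipate is bookkeeping rather than conceptual: precisely defining $\mathcal H^{rem}_{k+1}$ when $T_{k+1}$ is itself random. In particular, I must handle that the event $\{$history of length $T_{k+1}\}$ involves $X_{T_{k+1}}=1$, and make sure that the last action $a'_{T_k}$ and signal $s'_{T_k+1}$ are assigned to $\mathcal H^{fil}$ rather than double-counted. I would handle this by first conditioning on the value of $(N_1,\dots,N_{k+1})$, which is part of $r$, so that all stage indices become deterministic. The factorization is then a finite product computation, after which I sum back.
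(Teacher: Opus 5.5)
Your proposal is correct and rests on the same idea as the paper. The paper splits $\mathcal H^{rem}_{k+1}$ into three parts and argues verbally that none of them tells anything about $\omega'_{T_k+1}$ beyond what is in $\mathcal H^{fil}_{k+1}$: the waiting times $N_i$, the intermediate signals (which repeat filtered signals because $f$ is deterministic and the state is frozen), and the intermediate actions (which depend only on observed data). Your factorization $F(\eta_{k+1},r)\,G(\eta_{k+1},\omega)$ of the joint law is a rigorous version of that argument, and it proves joint rather than component-by-component conditional independence. It also states explicitly the modelling assumption that $X$ is drawn independently of states and actions, which the paper uses only implicitly.
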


\begin{proof}[Proof of Lemma~\ref{aux-lemma}]

We break $\mathcal{H}_{k+1}^{rem}$ into its three components:

\begin{itemize}
	\item \textbf{The waiting periods $N_{i}$}. These depend only on $h$ and are thus independent of $\omega'_{T_{k}+1}$. 
	\item \textbf{The intermediate signals} received during the waiting periods $N_{i}$. During any such period, the state remains constant.
Since the signals depend only on the state and are deterministic, the signals observed during the period $N_i$ simply repeat the $i$-th signal recorded in $\mathcal{H}_{k+1}^{fil}$. Hence, $\omega'_{T_{k}+1}$ does not add any new information.
	\item \textbf{The intermediate actions} taken during the waiting periods $N_{i}$. Because the decision maker only observes signals and not the state itself, the intermediate actions in $\mathcal H^{rem}_{k+1}$ depend only on the sequence of received signals and past actions. Since these signals are completely determined by $\mathcal{H}_{k+1}^{fil}$, the generation of $\mathcal H^{rem}_{k+1}$ is conditionally independent of $\omega'_{T_{k}+1}$.
\end{itemize}
Because all three components of $\mathcal{H}_{k+1}^{rem}$ depend only on $h$ or the information already captured in $\mathcal{H}_{k+1}^{fil}$, the true state $\omega'_{T_{k}+1}$ provides no new predictive power. Therefore, the conditional independence holds.
\end{proof}

\begin{proof}[Proof of Lemma~\ref{lemma1}]
We will prove a more general statement: for any $\omega \in \Omega, a \in A, k \in \N^*$, and any history $\eta_k$ of length $k$, we have
\begin{equation}
\P_{\widehat \sigma}^1(\eta_k, \omega_k = \omega, a_k = a) = \P_{\sigma}^h(\mathcal H^{fil}_k = \eta_k, \omega'_{T_k} = \omega, a'_{T_k} = a).
\label{rooss10}
\end{equation}

This identity implies the lemma. Indeed, by \eqref{rooss10} we have
\begin{equation}
\begin{aligned}
 \P_{\widehat \sigma}^1(\omega_k = \omega, a_k = a) &= \sum_{\substack{\text{Histories } \eta_{k} \text{ of} \\ \text{length } k \text{ in } G_1}} \P_{\widehat \sigma}^1(\eta_k, \omega_k = \omega, a_k = a)
= \sum_{\substack{\text{Histories } \eta_{k} \text{ of} \\ \text{length } k \text{ in } G_1}} \P_{\sigma}^h(\mathcal H^{fil}_k = \eta_k, \omega'_{T_k} = \omega, a'_{T_k} = a) \\
&= \P_{\sigma}^h(\omega'_{T_k} = \omega, a'_{T_k} = a).
\end{aligned}
\label{rooss101}
\end{equation}

Consequently, we obtain by \eqref{rooss101}
$$\E_{\widehat \sigma}^1 g(\omega_k, a_k) =
\sum_{\omega \in \Omega, a \in A}\P_{\widehat \sigma}^1(\omega_k = \omega, a_k = a) \cdot g(\omega, a) 
= \sum_{\omega \in \Omega, a \in A} \P_{\sigma}^h(\omega'_{T_k} = \omega, a'_{T_k} = a) \cdot g(\omega, a)
= \E_\sigma^h g(\omega'_{T_k}, a'_{T_k}).$$

The rest of the proof is dedicated to justifying \eqref{rooss10}. The proof proceeds by induction on $k$. We first consider the base case $k=1$. Since the initial probability distribution is the same in both $G_1$ and $G_h$, we have
\begin{equation}
\P_{\widehat \sigma}^1(\eta_1, \omega_1 = \omega) = \P_{\sigma}^h(\mathcal H^{fil}_1 = \eta_1, \omega_1 = \omega).
\label{tralala1}
\end{equation}

Because the state is frozen between stage $T_k + 1$ and stage $T_{k+1}$, we have for any $k$
\begin{equation}
\omega'_{T_{k+1}} = \omega'_{T_{k}+1}.
\label{roo10x3}
\end{equation}

By the definition of conditional probability, we have
\begin{equation}
\P_{\widehat \sigma}^1(\eta_1, \omega_1 = \omega, a_1 = a) = \P_{\widehat \sigma}^1(a_1 = a \mid \eta_1, \omega_1 = \omega) \cdot \P_{\widehat \sigma}^1(\eta_1, \omega_1 = \omega) = \widehat \sigma(\eta_1)(a) \cdot \P_{\widehat \sigma}^1(\eta_1, \omega_1 = \omega).
\label{tralala2}
\end{equation}

By the definition of conditional probability and by \eqref{roo10x3}, we have
\begin{equation}
\begin{aligned}
\P_{\sigma}^h(\mathcal H^{fil}_1 = \eta_1, \omega'_{T_1} = \omega, a'_{T_1} = a) 
&= \P_{\sigma}^h(a'_{T_1} = a \mid \mathcal H^{fil}_1 = \eta_1, \omega'_{T_1} = \omega) \cdot \P_{\sigma}^h(\mathcal H^{fil}_1 = \eta_1, \omega'_{T_1} = \omega) \\
&= \P_{\sigma}^h(a'_{T_1} = a \mid \mathcal H^{fil}_1 = \eta_1, \omega'_{1} = \omega) \cdot \P_{\sigma}^h(\mathcal H^{fil}_1 = \eta_1, \omega'_{1} = \omega).
\end{aligned}
\label{tralala3}
\end{equation}

Now, let us fix a history $\eta'_{1}$ of length $T_1$ in $G_h$. Provided that the event 
$$\left\{\mathcal H^{fil}_1 = \eta_1, \eta'_{1}, \omega'_{1} = \omega\right\}$$
has positive probability, we have $\eta'_{1} = \left(\mathcal H^{fil}_1, \mathcal H^{rem}_1\right)$. Thus, by the law of total probability, we obtain
\begin{equation}
\begin{aligned}
&\P_{\sigma}^h (a'_{T_1} = a \mid \mathcal H^{fil}_1 = \eta_1, \omega'_{1} = \omega) 
\\
=&
\sum_{\substack{\text{Histories } \eta'_{1} \text{ of} \\ \text{length } T_{1} \text{ in } G_h}}
\P_{\sigma}^h (a'_{T_1} = a \mid \mathcal H^{fil}_1 = \eta_1, \eta'_{1}, \omega'_{1} = \omega) \cdot 
\P_{\sigma}^h (\eta'_{1} \mid \mathcal H^{fil}_1 = \eta_1, \omega'_{1} = \omega) \\
=&
\sum_{\mathcal H^{rem}_1}
\P_{\sigma}^h (a'_{T_1} = a \mid \mathcal H^{fil}_1 = \eta_1, \mathcal H^{rem}_1, \omega'_{1} = \omega) \cdot 
\P_{\sigma}^h (\mathcal H^{rem}_1 \mid \mathcal H^{fil}_1 = \eta_1, \omega'_{1} = \omega).
\end{aligned}
\label{tralala4}
\end{equation}

Since $a'_{T_{1}}$ depends by definition only on the history of length $T_{1}$, we have
\begin{equation}
\P_{\sigma}^h (a'_{T_1} = a \mid \mathcal H^{fil}_1 = \eta_1, \mathcal H^{rem}_1, \omega'_{1} = \omega) = \P_{\sigma}^h (a'_{T_1} = a \mid \mathcal H^{fil}_1 = \eta_1, \mathcal H^{rem}_1).
\label{tralala5}
\end{equation}

By Lemma~\ref{aux-lemma}, we have
\begin{equation}
\P_{\sigma}^h (\mathcal H^{rem}_1 \mid \mathcal H^{fil}_1 = \eta_1, \omega'_{1} = \omega) = \P_{\sigma}^h (\mathcal H^{rem}_1 \mid \mathcal H^{fil}_1 = \eta_1).
\label{tralala6}
\end{equation}

From \eqref{tralala4}, \eqref{tralala5}, \eqref{tralala6}, and the law of total probability, we obtain
\begin{equation}
\begin{aligned}
\P_{\sigma}^h (a'_{T_1} = a \mid \mathcal H^{fil}_1 = \eta_1, \omega'_{1} = \omega) 
&=
\sum_{\mathcal H^{rem}_1}
\P_{\sigma}^h (a'_{T_1} = a \mid \mathcal H^{fil}_1 = \eta_1, \mathcal H^{rem}_1) \cdot 
\P_{\sigma}^h (\mathcal H^{rem}_1 \mid \mathcal H^{fil}_1 = \eta_1) \\
&=
\P_{\sigma}^h (a'_{T_1} = a \mid \mathcal H^{fil}_1 = \eta_1)
=
\widehat \sigma(\eta_1)(a).
\end{aligned}
\label{tralala7}
\end{equation}

From \eqref{tralala1}, \eqref{tralala2}, \eqref{tralala3}, and \eqref{tralala7}, we obtain
$$\P_{\widehat \sigma}^1(\eta_1, \omega_1 = \omega, a_1 = a) = \P_{\sigma}^h(\mathcal H^{fil}_1 = \eta_1, \omega'_{T_1} = \omega, a'_{T_1} = a). $$
This completes the proof of the base case for the induction.

We now suppose that \eqref{rooss10} holds for some $k \in \N^*$, and we will prove that it also holds for $k+1$. Let $\eta_{k+1} = (\eta_k, \overline a, \overline s).$

In $G_1$, we have (similarly to \eqref{tralala2})
\begin{equation} 
\P_{\widehat \sigma}^1(\eta_{k+1}, \omega_{k+1} = \omega, a_{k+1} = a) = \P_{\widehat \sigma}^1(\eta_{k+1}, \omega_{k+1} = \omega) \cdot \widehat \sigma(\eta_{k+1})(a).
\label{roo10}
\end{equation}

By construction of the events $\mathcal H^{fil}_{k+1} = \eta_{k+1}$ and $\eta_{k+1}$, we can write them as unions of disjoint events:

\begin{equation}
\begin{aligned}
\left\{\eta_{k+1}\right\} &= \bigcup_{\widehat \omega \in \Omega} \left\{\eta_{k}, \omega'_{T_{k}} = \widehat \omega, a'_{T_{k}} = \overline a, s'_{T_{k}+1} = \overline s \right\}; \\
\left\{\mathcal H^{fil}_{k+1} = \eta_{k+1}\right\} &= \bigcup_{\widehat \omega \in \Omega} \left\{\mathcal H^{fil}_{k} = \eta_{k}, \omega'_{T_{k}} = \widehat \omega, a'_{T_{k}} = \overline a, s'_{T_{k}+1} = \overline s \right\}.
\end{aligned}
\label{roo10x4}
\end{equation}

By \eqref{roo10x4}, the law of total probability, and the definition of conditional probability, we have
\begin{equation}
\begin{aligned}
\mathbb{P}_{\widehat{\sigma}}^1(\eta_{k+1}, \omega_{k+1} = \omega) 
&= \sum_{\widehat{\omega} \in \Omega} \mathbb{P}_{\widehat{\sigma}}^1(\eta_k, \omega_k = \widehat{\omega}, a_k = \overline{a}, \omega_{k+1} = \omega, s_{k+1} = \overline{s}) \\
&= \sum_{\widehat{\omega} \in \Omega} \mathbb{P}_{\widehat{\sigma}}^1(\eta_k, \omega_k = \widehat{\omega}, a_k = \overline{a}) \cdot \mathbb{P}_{\widehat{\sigma}}^1(\omega_{k+1} = \omega, s_{k+1} = \overline{s} \mid \eta_k, \omega_k = \widehat{\omega}, a_k = \overline{a}) \\
&= \sum_{\widehat{\omega} \in \Omega} \mathbb{P}_{\widehat{\sigma}}^1(\eta_k, \omega_k = \widehat{\omega}, a_k = \overline{a}) \cdot P(\omega \mid \overline{a}, \widehat{\omega}) \cdot \mathbf{1}_{f(\omega) = \overline{s}}
\end{aligned}
\label{roo10x1}
\end{equation}

In $G_h$, it follows from the definition of conditional probability that
\begin{multline}
\P_{\sigma}^h(\mathcal H^{fil}_{k+1} = \eta_{k+1}, \omega'_{T_{k+1}} = \omega, a'_{T_{k+1}} = a) = \\ \P_{\sigma}^h(\mathcal H^{fil}_{k+1} = \eta_{k+1}, \omega'_{T_{k+1}} = \omega) \cdot \P_{\sigma}^h(a'_{T_{k+1}} = a \mid \mathcal H^{fil}_{k+1} = \eta_{k+1}, \omega'_{T_{k+1}} = \omega).
\label{roo11}
\end{multline}

Using \eqref{roo10x3}, \eqref{roo10x4}, the law of total probability, and the definition of conditional probability, we obtain
\begin{equation}
\begin{aligned}
&\P_{\sigma}^h(\mathcal H^{fil}_{k+1} = \eta_{k+1}, \omega'_{T_{k+1}} = \omega) = \sum_{\widehat \omega \in \Omega}
\P_{\sigma}^h(\mathcal H^{fil}_{k} = \eta_{k}, \omega'_{T_{k}+1} = \omega, \omega'_{T_{k}} = \widehat \omega, a'_{T_{k}} = \overline a, s'_{T_{k}+1} = \overline s) \\
&=
\sum_{\widehat \omega \in \Omega}
\P_{\sigma}^h(\mathcal H^{fil}_{k} = \eta_{k}, \omega'_{T_{k}} = \widehat \omega, a'_{T_{k}} = \overline a) \cdot 
\P_{\sigma}^h(\omega'_{T_{k}+1} = \omega, s'_{T_{k}+1} = \overline s \mid \mathcal H^{fil}_{k} = \eta_{k}, \omega'_{T_{k}} = \widehat \omega, a'_{T_{k}} = \overline a). \\
&=
\sum_{\widehat \omega \in \Omega}
\P_{\sigma}^h(\mathcal H^{fil}_{k} = \eta_{k}, \omega'_{T_{k}} = \widehat \omega, a'_{T_{k}} = \overline a) \cdot P(\omega \mid \overline a, \widehat \omega) \cdot \mathbf{1}_{f(\omega) = \overline s}
\end{aligned}
\label{roo10x2}
\end{equation}

Thus, by \eqref{roo10x1}, \eqref{roo10x2}, and the induction hypothesis, we have
\begin{equation}
\mathbb{P}_{\widehat{\sigma}}^1(\eta_{k+1}, \omega_{k+1} = \omega) = \P_{\sigma}^h(\mathcal H^{fil}_{k+1} = \eta_{k+1}, \omega'_{T_{k+1}} = \omega)
\label{roo08}
\end{equation}

By the law of total probability, we obtain
\begin{multline}
\P_{\sigma}^h(a'_{T_{k+1}} = a \mid \mathcal H^{fil}_{k+1} = \eta_{k+1}, \omega'_{T_{k}+1} = \omega) = \\ 
\sum_{\substack{\text{Histories } \eta'_{k+1} \text{ of} \\ \text{length } T_{k+1} \text{ in } G_h}}
\P_{\sigma}^h(a'_{T_{k+1}} = a \mid \eta'_{k+1}, \mathcal H^{fil}_{k+1} = \eta_{k+1}, \omega'_{T_{k}+1} = \omega) \cdot \P_{\sigma}^h(\eta'_{k+1} \mid \mathcal H^{fil}_{k+1} = \eta_{k+1}, \omega'_{T_{k}+1} = \omega),
\label{roo04}
\end{multline}

Since $a'_{T_{k+1}}$ depends by definition only on the history of length $T_{k+1}$, we have

\begin{equation}
\P_{\sigma}^h(a'_{T_{k+1}} = a \mid \eta'_{k+1}, \mathcal H^{fil}_{k+1} = \eta_{k+1}, \omega'_{T_{k}+1} = \omega) = \P_{\sigma}^h(a'_{T_{k+1}} = a \mid \eta'_{k+1}).
\label{roo03}
\end{equation}

By the Bayes' theorem, we obtain
\begin{equation}
\P_{\sigma}^h(\eta'_{k+1} \mid \mathcal H^{fil}_{k+1} = \eta_{k+1}, \omega'_{T_{k}+1} = \omega) = 
\frac{\P_{\sigma}^h(\omega'_{T_{k}+1} = \omega \mid \eta'_{k+1}, \mathcal H^{fil}_{k+1} = \eta_{k+1}) \cdot \P_{\sigma}^h(\eta'_{k+1} \mid \mathcal H^{fil}_{k+1} = \eta_{k+1})}{\P_{\sigma}^h(\omega'_{T_{k}+1} = \omega \mid \mathcal H^{fil}_{k+1} = \eta_{k+1})}.
\label{roo01}
\end{equation}


Provided that the event 
$$\{\eta'_{k+1}, \mathcal H^{fil}_{k+1} = \eta_{k+1}\}$$
 has positive probability, we have $\eta'_{k+1} = \left(\mathcal{H}_{k+1}^{fil}, \mathcal{H}_{k+1}^{rem}\right)$. Consequently, we obtain
\begin{equation}
\P_{\sigma}^h(\omega'_{T_{k}+1} = \omega \mid \eta'_{k+1}, \mathcal H^{fil}_{k+1} = \eta_{k+1}) =
\P_{\sigma}^h(\omega'_{T_{k}+1} = \omega \mid \mathcal{H}_{k+1}^{rem}, \mathcal H^{fil}_{k+1} = \eta_{k+1})
\label{rttt02}
\end{equation}

By Lemma~\ref{aux-lemma}, we have

\begin{equation}
\P_{\sigma}^h(\omega'_{T_{k}+1} = \omega \mid \mathcal{H}_{k+1}^{rem}, \mathcal H^{fil}_{k+1} = \eta_{k+1}) =
\P_{\sigma}^h(\omega'_{T_{k}+1} = \omega \mid \mathcal H^{fil}_{k+1} = \eta_{k+1}),
\label{rttt01}
\end{equation}

Thus, by \eqref{roo01}, \eqref{rttt02}, and \eqref{rttt01}, we obtain

\begin{equation}
\P_{\sigma}^h(\eta'_{k+1} \mid \mathcal H^{fil}_{k+1} = \eta_{k+1}, \omega'_{T_{k}+1} = \omega) = 
\P_{\sigma}^h(\eta'_{k+1} \mid \mathcal H^{fil}_{k+1} = \eta_{k+1}).
\label{roo02}
\end{equation}

From \eqref{roo04}, \eqref{roo03}, \eqref{roo02}, and the law of total probability, we have
\begin{equation}
\begin{aligned}
\P_{\sigma}^h(a'_{T_{k+1}} = a \mid \mathcal H^{fil}_{k+1} = \eta_{k+1}, \omega'_{T_{k}+1} = \omega) &=  \sum_{\substack{\text{Histories } \eta'_{k+1} \text{ of} \\ \text{length } T_{k+1} \text{ in } G_h}}
\P_{\sigma}^h(a'_{T_{k+1}} = a \mid \eta'_{k+1}) \cdot \P_{\sigma}^h(\eta'_{k+1} \mid \mathcal H^{fil}_{k+1} = \eta_{k+1}) \\
&= \P_{\sigma}^h(a'_{T_{k+1}} = a \mid  \mathcal H^{fil}_{k+1} = \eta_{k+1}) = \widehat \sigma(\eta_{k+1})(a).
\end{aligned}
\label{roo09}
\end{equation}

Finally, by combining \eqref{roo10}, \eqref{roo11}, \eqref{roo08}, and \eqref{roo09}, we have
$$\P_{\widehat \sigma}^1(\eta_{k+1}, \omega_{k+1} = \omega, a_{k+1} = a) = 
\P_{\sigma}^h(\mathcal H^{fil}_{k+1} = \eta_{k+1}, \omega'_{T_{k+1}} = \omega, a'_{T_{k+1}} = a).$$

This completes the proof of the induction step.
\end{proof}

\subsection{Proof of Lemma~\ref{lemma3}}

\begin{proof}[Proof of Lemma~\ref{lemma3}]
We have
\begin{equation}
\begin{aligned}
\E_{\sigma}^h \left(\sum_{j=T_{k-1} + 1}^{T_k} g(\omega_j, a_j) \right) &= 
\E_{\sigma}^h \left(\sum_{m = 1}^{\infty} \mathbf{1}_{\{N_k \ge m\}} \cdot g(\omega_{T_{k-1} + m}, a_{T_{k-1} + m})\right) \\
&= \sum_{m = 1}^{\infty}  \E_{\sigma}^h \left(\mathbf{1}_{\{N_k \ge m\}} \cdot g(\omega_{T_{k-1} + m}, a_{T_{k-1} + m})\right),
\end{aligned}
\label{hoho1}
\end{equation}
where the last equality holds by Fubini's theorem (since $g$ is bounded and $\E(N_i) = 1/h < \infty$, the tail sum formula implies absolute convergence). We also have
\begin{equation}
\begin{aligned}
\E_\sigma^h g(\omega_{T_k}, a_{T_k}) &= 
\E_{\sigma}^h \left(\sum_{m = 1}^{\infty} \mathbf{1}_{\{N_k = m\}} \cdot g(\omega_{T_{k-1} + m}, a_{T_{k-1} + m})\right) \\
&= \sum_{m = 1}^{\infty} \E_{\sigma}^h \left( \mathbf{1}_{\{N_k = m\}} \cdot g(\omega_{T_{k-1} + m}, a_{T_{k-1} + m})\right).
\end{aligned}
\label{hoho2}
\end{equation}

Now, the event $\{N_k = m\}$ can be expressed as the intersection of two independent events: 
$$\{N_k = m\} = \{N_k\ge m\} \cap \{X_{T_{k-1}+m} = 1\}.$$
Note that the random variable $X_{T_{k-1}+m}$ depends only on $h$ and is entirely independent of $N_i$, as well as the states and actions up to stage $T_{k-1}+m$. Consequently, the random variable $\mathbf{1}_{\{X_{T_{k-1}+m} = 1\}}$ is independent of the  random variable $\mathbf{1}_{\{N_k \ge m\}} \cdot g(\omega_{T_{k-1} + m}, a_{T_{k-1} + m})$. Hence, we have 
\begin{equation}
\begin{aligned}
\E_{\sigma}^h \left(\mathbf{1}_{\{N_k = m\}} \cdot g(\omega_{T_{k-1} + m}, a_{T_{k-1} + m})\right) &= \E_{\sigma}^h \left( \mathbf{1}_{\{X_{T_{k-1}+m} = 1\}} \cdot \mathbf{1}_{\{N_k \ge m\}} \cdot g(\omega_{T_{k-1} + m}, a_{T_{k-1} + m})\right) \\
&= \E_{\sigma}^h \left( \mathbf{1}_{\{X_{T_{k-1}+m} = 1\}}\right) \cdot \E_{\sigma}^h \left( \mathbf{1}_{\{N_k \ge m\}} \cdot g(\omega_{T_{k-1} + m}, a_{T_{k-1} + m})\right) \\
&= \P(X_{T_{k-1}+m} = 1) \cdot \E_{\sigma}^h \left( \mathbf{1}_{\{N_k \ge m\}} \cdot g(\omega_{T_{k-1} + m}, a_{T_{k-1} + m})\right) \\
&= h \cdot \E_{\sigma}^h \left( \mathbf{1}_{\{N_k \ge m\}} \cdot g(\omega_{T_{k-1} + m}, a_{T_{k-1} + m})\right).
\end{aligned}
\label{hoho21}
\end{equation}

The lemma now follows directly from \eqref{hoho1}, \eqref{hoho2}, and \eqref{hoho21}.
\end{proof}

\subsection{Proof of Lemma~\ref{lemma4}}

\begin{proof}[Proof of Lemma~\ref{lemma4}]
Let
$M := \max_{\omega, a} |g(\omega, a)|.$
By the triangle inequality, we have 
\begin{equation}
\E_{\sigma}^h \left(\left| \sum_{j = 1}^{T_k} g(\omega_j, a_j) - \sum_{j = 1}^{t_k} g(\omega_j, a_j)\right|\right) \le M \cdot \E_{\sigma}^h \big( |T_k - t_k| \big).
\label{rmx001}
\end{equation}

By Jensen's inequality, it follows that
\begin{equation}
\E_{\sigma}^h \left(\left|T_k - \frac k h \right| \right) = \E_{\sigma}^h \left(\left|T_k - \E_{\sigma}^h (T_k) \right|\right) \le \sqrt{\E_{\sigma}^h \left(\left|T_k - \E_{\sigma}^h (T_k) \right|^2\right)} =  \sqrt{\operatorname{Var}(T_k)} = \sqrt{\frac{1-h}{h^2} \cdot k }.
\label{rmx002}
\end{equation}

Finally, we have
\begin{equation}
\begin{aligned}
&\left|\E_{\sigma}^h\left( \frac{h}{k} \sum_{j = 1}^{T_k} g(\omega_j, a_j) 
-
\frac{1}{t_k} \sum_{j = 1}^{t_k} g(\omega_j, a_j) \right)\right| \\
= 
&\left|\E_{\sigma}^h\left( \frac{h}{k} \sum_{j = 1}^{T_k} g(\omega_j, a_j) 
-
\frac{h}{k} \sum_{j = 1}^{t_k} g(\omega_j, a_j)
+
\frac{h}{k}\sum_{j = 1}^{t_k} g(\omega_j, a_j)
-
\frac{1}{t_k} \sum_{j = 1}^{t_k} g(\omega_j, a_j) \right)\right| \\
\le
&\left|\E_{\sigma}^h \left[ \frac{h}{k} \left( \sum_{j = 1}^{T_k} g(\omega_j, a_j) 
-
\sum_{j = 1}^{t_k} g(\omega_j, a_j)\right) \right] \right|
+
\left| \E_{\sigma}^h \left(\left( \frac{h}{k} - \frac{1}{t_k}\right) \sum_{j = 1}^{t_k} g(\omega_j, a_j) \right) \right|,
\end{aligned}
\label{rmx003}
\end{equation}

We now evaluate each term of the above sum. By \eqref{rmx001} and \eqref{rmx002}, we have
\begin{equation}
\begin{aligned}
& \left|\E_{\sigma}^h \left[ \frac{h}{k} 
\left( \sum_{j = 1}^{T_k} g(\omega_j, a_j) 
-
\sum_{j = 1}^{t_k} g(\omega_j, a_j)\right) \right] \right|
\le
\frac{h}{k} \E_{\sigma}^h \left( \left|
\left( \sum_{j = 1}^{T_k} g(\omega_j, a_j) 
-
\sum_{j = 1}^{t_k} g(\omega_j, a_j)\right) \right|\right) \\
\le &
\frac{M h}{k} \E_{\sigma}^h \big( \left|T_k - t_k \right| \big)
\le
\frac{M h}{k} \E_{\sigma}^h \left(\left|T_k - \frac{k}{h}\right| \right)
+
\frac{M h}{k} \left|t_k - \frac{k}{h}\right| 
= M \cdot \sqrt{\frac{1-h}{k}} + 
\frac{M h}{k} \cdot 1 \xrightarrow{k \to \infty} 0.
\end{aligned}
\label{rmx004}
\end{equation}

We have
\begin{equation}
\begin{aligned}
\left| \E_{\sigma}^h \left( \left( \frac{h}{k} - \frac{1}{t_k}\right) \sum_{j = 1}^{t_k} g(\omega_j, a_j) \right) \right|
\le 
\left| \frac{h}{k} - \frac{1}{t_k}\right| \cdot \E_{\sigma}^h \left( \left| \sum_{j = 1}^{t_k} g(\omega_j, a_j)  \right| \right)
&\le 
\left| \frac{h}{k} - \frac{1}{t_k}\right| \cdot M t_k \\
&= 
M \left| \frac{h}{k} t_k - 1\right| \xrightarrow{k \to \infty} 0.
\end{aligned}
\label{rmx005}
\end{equation}

The lemma now follows from \eqref{rmx003}, \eqref{rmx004}, \eqref{rmx005}, and the standard fact that $\lim(x_n-y_n) = 0$ implies $\liminf x_n = \liminf y_n$.
\end{proof}

\subsection{Proof of Lemma~\ref{lemma5}}

\begin{proof}[Proof of Lemma~\ref{lemma5}]
Since $\{x_{n_k}\}_{k=1}^\infty$ is a subsequence of $\{x_n\}_{n=1}^\infty$, we have 
\begin{equation}
\liminf_{n \to \infty} x_n \le \liminf_{k \to \infty} x_{n_k}.
\label{kkk1}
\end{equation}

We now prove the reverse inequality. By definition, there exists a subsequence $\{x_{m_k}\}_{k=1}^\infty$ converging to $\liminf_{n \to \infty} x_n$. For each subsequence index $m_j$ of $\{x_{m_k}\}_{k=1}^\infty$, let $n_{k_j}$ be the largest subsequence index of $\{x_{n_k}\}_{k=1}^\infty$ such that $n_{k_j} \le m_j$. By the triangle inequality, we have
\begin{equation}
|x_{m_j} - x_{n_{k_j}}| \le \sum_{i = n_{k_j}}^{m_j - 1} |x_{i+1} - x_i| \le (m_j - n_{k_j}) \cdot \sup_{i \ge n_{k_j}} |x_{i+1} - x_i| \le M \cdot \sup_{i \ge n_{k_j}} |x_{i+1} - x_i| \xrightarrow{j \to \infty} 0.
\label{kkk2}
\end{equation}

Since $n_{k_j} \to \infty$ as $j \to \infty$, the set of accumulation points of the sequence $\{x_{n_{k_j}}\}_{j=1}^\infty$  is contained in the set of accumulation points of the sequence $\{x_{n_{k}}\}_{k=1}^\infty$. Together, this inclusion and \eqref{kkk2} imply
\begin{equation}
\liminf_{k \to \infty} x_{n_k} \le \liminf_{j \to \infty} x_{n_{k_j}} = \liminf_{j \to \infty} x_{m_j} = \lim_{j \to \infty} x_{m_j} = \liminf_{n \to \infty} x_n,
\label{kkk3}
\end{equation}
where the last equality follows from the definition of the subsequence $\{x_{m_j}\}_{j=1}^\infty$. The lemma now follows directly from \eqref{kkk1} and \eqref{kkk3}.
\end{proof}

\section{Concluding remarks}
\subsection{Fully observed state}
\label{final1}

This subsection is dedicated to the case where the signal fully reveals the state, meaning that $S = \Omega$ and $f(\omega) = \omega$.  In such a case, \cite[Proposition 5.2]{SorVig16} proves that for all $\lambda \in (0,1]$ and $h \in (0,1]$, we have
$$V_\lambda(h) = V_{\frac{\lambda}{1+\lambda - \lambda h}}(1).$$

This implies that for any $h \in (0,1]$, we have
$$V(h) = \lim_{\lambda \to 0} V_\lambda(h) =  \lim_{\lambda \to 0} V_{\frac{\lambda}{1+\lambda - \lambda h}}(1) = \lim_{\lambda \to 0} V_\lambda(1) = V(1).$$

Hence, $V$ is a constant function in this particular case.

\subsection{Continuity of $V(h)$}
\label{final2}

In this subsection, we examine whether the function $h \mapsto V(h)$ is continuous.

\begin{proposition}
\label{graa1}
The function $h \mapsto V(h)$ is lower semi-continuous on $(0,1)$.
\end{proposition}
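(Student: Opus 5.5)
Lower semi-continuity at $h_0\in(0,1)$ requires $\liminf_{h\to h_0} V(h)\ge V(h_0)$. Corollary~\ref{corollary2} makes $V$ nondecreasing, so approaching $h_0$ from the right is harmless: $V(h)\ge V(h_0)$ for every $h>h_0$. The only thing left to prove is
\[
\lim_{h\uparrow h_0} V(h) \ge V(h_0).
\]
The monotone limit on the left exists and is at most $V(h_0)$. So the claim amounts to left-continuity at $h_0$, which is the content of Proposition~\ref{graa2}.

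The plan is to transfer near-optimal strategies from $G_{h_0}$ down to $G_h$ with $h<h_0$ close to $h_0$.

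\textbf{Step 1: express $G_{h_0}$ through $G_h$.} Fix $\varepsilon>0$ and a strategy $\sigma$ in $G_{h_0}$ with $R(\sigma,h_0)\ge V(h_0)-\varepsilon$. As in the proof of Corollary~\ref{corollary1},
\[
P_{h_0}=(1-h_0)\,Id+h_0 P = \Bigl(1-\tfrac{h_0-h}{1-h}\Bigr)P_h+\tfrac{h_0-h}{1-h}\,P .
\]
Thus $G_{h_0}$ behaves like $G_h$, except that at each stage an independent coin with small probability $q=(h_0-h)/(1-h)$ forces a full $P$-transition instead of a $P_h$-transition.

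\textbf{Step 2: transfer $\sigma$ to $G_h$ and bound the error.} Build a strategy $\sigma_h$ in $G_h$ that plays $\sigma$ directly, with no extra filtering: it uses the same behavior map on histories. The two processes can be coupled stage by stage. The transition kernels $P_{h_0}(\cdot\mid\omega,a)$ and $P_h(\cdot\mid\omega,a)$ differ by at most $h_0-h$ in total variation, so the two histories coincide up to a geometric time with mean of order $1/(h_0-h)$. Finite-horizon averages are therefore close, but a stationary average payoff is not directly controlled by such a coupling. I would fix this with a \emph{block-restart} argument. Choose a horizon $L$ such that the $L$-stage average under $\sigma$ in $G_{h_0}$ is at least $V(h_0)-2\varepsilon$ from every belief the strategy can reach at block starts. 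This would use uniform-value-type results for POMDPs via \cite{RosSolVie02}: there exist $\varepsilon$-optimal strategies that are good on every long block, uniformly in the initial belief. Then play in $G_h$ a strategy that, at the start of each block, uses the current belief and runs the corresponding $L$-block strategy. On each block the total variation error is at most $L(h_0-h)$. Hence
\[
R(\sigma_h,h)\ge V(h_0)-2\varepsilon-2ML(h_0-h),
\]
where $M=\max|g|$, and letting $h\uparrow h_0$ gives the bound.

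\textbf{Main obstacle.} The hardest step is the uniformity in Step 2. One needs a single block length $L$ that works from every belief reachable in $G_h$. However, the beliefs reachable in $G_h$ can differ from those reachable in $G_{h_0}$, since both the belief dynamics and the payoff depend on $h$. My first attempt would use that the map $(p,h)\mapsto$ ($L$-stage value of the POMDP $G_h$ started at belief $p$) is jointly continuous on the compact set $\Delta(\Omega)\times[h_0/2,1]$: it is a finite-horizon problem with continuous data. I would combine this with the uniform convergence of $n$-stage values to $V$ for POMDPs proved in \cite{RosSolVie02}, applied to the belief-dependent version of $V(h_0)$. The bound would first be obtained for the value as a function of the initial belief, and then specialized to $p_1$.
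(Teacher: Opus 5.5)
Your reduction is correct: since $V$ is nondecreasing, lower semi-continuity at $h_0$ is equivalent to $\lim_{h\uparrow h_0}V(h)\ge V(h_0)$. The gap is in Step~2.

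Write $v^h_n(p)$ for the $n$-stage value of $G_h$ from belief $p$, and $V^{h_0}(p)$ for the asymptotic value of $G_{h_0}$ from $p$. A coupling with total-variation cost $L(h_0-h)$ per block controls each block only relative to the belief $p^{(j)}$ at which block $j$ starts in $G_h$. The most you can extract is a block average of roughly $V^{h_0}(p^{(j)})-\varepsilon-2ML(h_0-h)$. Nothing stops $\E\,V^{h_0}(p^{(j)})$ from drifting down. Each block may lose value of order $ML(h_0-h)$, and over infinitely many blocks these losses compound for every fixed $h<h_0$. So the bound $R(\sigma_h,h)\ge V(h_0)-2\varepsilon-2ML(h_0-h)$ does not follow.

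Your proposed fix does not close this. Joint continuity of $(p,h)\mapsto v^h_L(p)$, combined with uniform convergence at the single duration $h_0$, only gives $v^h_L(p)\ge V^{h_0}(p)-2\varepsilon$ for $h$ near $h_0$. That is a statement about $L$-stage values of $G_h$, not about $V^h$. Passing from $v^h_L$ to $V^h$ would need $v^h_n\to V^h$ to converge uniformly in $h$ as well. That is false: on your set $\Delta(\Omega)\times[h_0/2,1]$ it would make $V$ a uniform limit of continuous functions, hence continuous at $h=1$.

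A sanity check shows the argument cannot be repaired as stated. It never uses $h_0<1$, so it would equally prove $\lim_{h\uparrow 1}V(h)\ge V(1)$, which Example~\ref{exam1} refutes. There, the alternating strategy run in $G_h$ loses only $O(L(1-h))$ of the surviving mass to $\omega_3$ per block, exactly as your per-block estimate predicts. Yet the losses compound, and $V(h)=0<1=V(1)$.

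The missing ingredient is what separates $h_0\in(0,1)$ from $h_0=1$: for every $h\in(0,1)$ the kernel $P_h(\cdot\mid\omega,a)$ has the same support, $\operatorname{supp}P(\cdot\mid\omega,a)\cup\{\omega\}$. The paper's proof is exactly this observation plus \cite[Corollary~3.10]{Cha22}, which gives lower semi-continuity of the POMDP value in the transition data when the support is fixed. The mechanism behind that result is the existence of $\varepsilon$-optimal finite-memory strategies. Under a fixed finite-memory strategy, play is a finite Markov chain on (state, memory) whose transition graph does not depend on $h\in(0,1)$ and whose transition probabilities are affine in $h$. Its long-run average payoff is therefore continuous in $h$, and $V(h)$, a supremum of continuous functions, is lower semi-continuous.
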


\begin{proof}
If $h' \in (0,1)$ and $\varepsilon > 0$ is sufficiently small, then the support of the transition probabilities of the POMDPs in the family $\{G_h\}_{h \in (h'-\varepsilon, h'+\varepsilon)}$ is identical. The proposition now follows directly from \cite[Corollary~3.10]{Cha22}.
\end{proof}

\begin{proposition}
\label{graa2}
The function $h \mapsto V(h)$ is left-continuous on $(0,1)$.
\end{proposition}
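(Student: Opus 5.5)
Left-continuity at $h'\in(0,1)$ means $\lim_{h\to h'^-}V(h)=V(h')$. The plan is to combine monotonicity (Corollary~\ref{corollary2}) with lower semi-continuity (Proposition~\ref{graa1}).

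First, by Corollary~\ref{corollary2}, $V$ is nondecreasing on $(0,1]$. Hence the left limit $L:=\lim_{h\to h'^-}V(h)$ exists, and it equals $\sup_{h<h'}V(h)$. Monotonicity also gives $V(h)\le V(h')$ for all $h<h'$, so $L\le V(h')$.

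Second, for the reverse inequality I use Proposition~\ref{graa1}. Since $h'\in(0,1)$, lower semi-continuity at $h'$ gives $\liminf_{h\to h'}V(h)\ge V(h')$. In particular, along any sequence $h_n\uparrow h'$ we get $\liminf_n V(h_n)\ge V(h')$. This liminf is exactly $L$, because the left limit exists. Thus $L\ge V(h')$.

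Combining the two inequalities gives $L=V(h')$, which is left-continuity on $(0,1)$.

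The argument rests entirely on the two earlier results, so there is no real obstacle. The only point to check is that Proposition~\ref{graa1} applies at every point of the open interval, which holds since $h'<1$. This restriction is consistent with Example~\ref{exam1}: at $h=1$ the support of $P_h$ may change, so lower semi-continuity, and with it this argument, can fail there.
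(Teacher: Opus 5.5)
Your proof is correct and is the same argument as the paper's: the paper also deduces left-continuity directly from the monotonicity of $V$ (Corollary~\ref{corollary2}) combined with its lower semi-continuity on $(0,1)$ (Proposition~\ref{graa1}). You simply write out the two resulting inequalities $L \le V(h')$ and $L \ge V(h')$ explicitly.
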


\begin{proof}
This follows directly from Proposition~\ref{graa1} and from the monotonicity of $V(h)$.
\end{proof}

However, it is possible to introduce new transitions when moving from $h=1$ to $h<1$, which implies that the results from \cite{Cha22} cannot be applied. In fact, it is possible that $V(h)$ is not lower semi-continuous at $1$, even in the state-blind case. 

\begin{example}[An example of a POMDP in which $V(h)$ is neither lower semi-continuous nor left-continuous at~$1$]
\label{exam1}
Consider a POMDP $G_1$ with the following components:
\begin{itemize}
	\item Signal set: $\{s_1\}$.
	\item Action set: $\{a, b\}$.
	\item State set: $\{\omega_1, \omega_2, \omega_3\}$.
	\item Initial state: $\omega_1$.
	\item Stage payoff function: $g(a,\omega_1) = g(b,\omega_1) = g(a,\omega_2) = g(b,\omega_2) = 1$, and $g(a,\omega_3) = g(b,\omega_3) = 0$.
	\item Transitions: $$P(\omega_1 \mid b,\omega_2) = P(\omega_2 \mid a,\omega_1) = P(\omega_3 \mid b, \omega_1) = P(\omega_3 \mid a,\omega_2) = P(\omega_3 \mid a,\omega_3) = P(\omega_3 \mid b,\omega_3) = 1.$$
\end{itemize}
See Figure~\ref{fig1} for a visual representation. 

Now, given a base POMDP  $G_1$, consider the POMDP with stage duration $G_h$. It is clear that the decision maker can achieve a payoff of $1$ by playing the strategy $(a,b,a,b, \ldots)$. However, as soon as $h<1$, the state stays frozen with a probability of $1-h$. Because of this, the decision maker's belief about the state (conditional on the fact that it is not $\omega_3$) will converge to $\frac 1 2 \omega_1 + \frac 1 2 \omega_2$. 
Since there is only a single signal ($s_1$), the decision maker cannot detect when a state transition fails to occur. Once his belief gets sufficiently close to
$\frac{1}{2}\omega_1 + \frac{1}{2}\omega_2$, any chosen action carries a probability of $\approx 0.5$ of incorrectly guessing the current state. Because of that, the decision maker eventually reaches the absorbing state $\omega_3$ with payoff $0$. Thus we have
$$V(h) = 
\begin{cases}
1, \text{ if } h = 1 \\
0, \text{ if } h < 1.
\end{cases}$$
\end{example}

\begin{figure}[h]
	\large
    \centering 
    
    \begin{tikzpicture}[
        ->,                 
        >=stealth,          
        shorten >=1pt,      
        auto,               
        node distance=3cm, 
        very thick,         
        state/.style={      
            circle, 
            draw, 
            minimum size=1.0cm,
            font=\Large     
        }
    ]

    \node[state, label=above:$\omega_1$] (s1) at (0,0) {1};
    
    \node[state, label=above:$\omega_2$] (s2) at (4,0) {1};
    
    \node[state, label=below:$\omega_3$] (s3) at (2,-3) {0};

    \path
        (s1) edge [bend left=20] node {$a$} (s2)
        
        (s2) edge [bend left=20] node {$b$} (s1)
        
        (s1) edge [bend right=15] node [swap] {$b$} (s3)
        
        (s2) edge [bend left=15] node {$a$} (s3)
        
        (s3) edge [loop, out=245, in=215, looseness=8] node [below left] {$a$} (s3)
        (s3) edge [loop, out=295, in=325, looseness=8] node [below right] {$b$} (s3);
    \end{tikzpicture}
    \caption{The POMDP $G_1$ from Example~\ref{exam1}}
    \label{fig1}
\end{figure}

\begin{remark}
It is not yet known whether $h \mapsto V(h)$ is continuous on $(0,1)$. In general, it is possible for the asymptotic value to be discontinuous even if no new transitions are introduced; see \cite[Proposition~3.11 and its proof]{Cha22}. However, the author was unable to adapt the counterexample from \cite{Cha22} to construct one for $V(h)$. \demo
\end{remark}

\subsection{The case of random signals}
\label{grana1}

Throughout the paper, we have assumed that the signal is deterministic and is given by a function $f : \Omega \to S$. We can examine what happens if the signal is random, i.e., if it is given by a function $f : \Omega \to \Delta(S)$. In this case, we can still consider POMDPs with stage duration, as defined in Definition~\ref{klp01}. However, our proof no longer works because Lemma~\ref{aux-lemma} no longer holds. Moreover, Theorem~\ref{mainlemma} is no longer valid. To see this, we consider the following example.

\begin{example}[An example of a POMDP with random signals where Theorem~\ref{mainlemma} does not hold]
\label{exam2}
Consider a POMDP $G_1$ with the following components:
\begin{itemize}
	\item Signal set: $\{s_1, s_2\}$.
	\item Action set: $\{a, b\}$.
	\item State set: $\{\omega_1, \omega_2\}$.
	\item Signal-giving function: $f(\omega_1) = s_1$ and $f(\omega_2) = \frac 1 2 s_1 + \frac 1 2 s_2$.
	\item Initial state: $\frac 1 2 \omega_1 + \frac 1 2 \omega_2$.
	\item Stage payoff function: $g(a,\omega_1) = g(b,\omega_2) = 1$, and $g(b,\omega_1) = g(a,\omega_2) = 0$.
	\item Transitions: $P(\omega_1 \mid \cdot,\cdot) = P(\omega_2 \mid \cdot, \cdot) = 1 / 2.$
\end{itemize}
See Figure~\ref{fig2} for a visual representation.

Let us show that $\lim_{h \to 0} V(h) > V(1)$, which implies that the function $h \mapsto V(h)$ is not nondecreasing. From the structure of the POMDP, we see that an optimal strategy in $G_1$ is to play $a$ if the observed signal is $s_1$, and to play $b$ if the observed signal is $s_2$. Under this strategy, the expected stage payoff is 
\[
\underbrace{\frac 3 4}_{\P(s_1)} \cdot \underbrace{\frac 2 3}_{\P(\omega_1 \mid s_1)} + \underbrace{\frac 1 4}_{\P(s_2)} \cdot \underbrace{1}_{\P(\omega_2 \mid s_2)} = \frac 3 4.
\]
Thus the optimal total payoff in $G_1$ is $3/4$. 

In $G_h$, the state does not move for $\approx 1 / h$ stages. Consequently, for a small $h$, the decision maker essentially observes the underlying signal distribution. As a result, the optimal payoff converges to $1$ as $h$ vanishes.

Thus, we have shown that
\[\lim_{h \to 0} V(h) = 1 > 3/4 = V(1).\]
\end{example}

\begin{figure}[H]
    \centering 
    
    \begin{tikzpicture}[
        ->,                 
        >=stealth,          
        shorten >=1pt,      
        auto,               
        node distance=5cm, 
        very thick,         
        state/.style={      
            circle, 
            draw, 
            minimum size=1.2cm,
            font=\Large     
        },
        info/.style={
            align=center,
            font=\normalsize
        }
    ]

    \node[state] (w1) at (0,0) {$\omega_1$};
    \node[state] (w2) at (6,0) {$\omega_2$};
    
    \node[info, below=0.3cm of w1] {
        \textbf{Signal:} $s_1$ (100\%) \\ 
        $g(a, \omega_1) = 1$ \\ 
        $g(b, \omega_1) = 0$
    };
    
    \node[info, below=0.3cm of w2] {
        \textbf{Signal:} $s_1$ (50\%), $s_2$ (50\%) \\ 
        $g(a, \omega_2) = 0$ \\ 
        $g(b, \omega_2) = 1$
    };

    \path
        (w1) edge [loop left, looseness=6] node [left] {$1/2$} (w1)
        (w1) edge [bend left=20] node [above] {$1/2$} (w2)
        
        (w2) edge [bend left=20] node [below] {$1/2$} (w1)
        (w2) edge [loop right, looseness=6] node [right] {$1/2$} (w2);
        
    \end{tikzpicture}
    \caption{The POMDP $G_1$ from Example~\ref{exam2}}
    \label{fig2}
\end{figure}

\section{Acknowledgments}

The author is grateful to Guillaume Vigeral for his help during the writing of this article. The author is grateful to Raimundo Saona and Eilon Solan for useful discussions.

\bibliographystyle{chicago}
\bibliography{ref}

\end{document}